\newtheorem{theorem}{Theorem}[section]
\newtheorem{definition}[theorem]{Definition}
\newtheorem{proposition}[theorem]{Proposition}
\newtheorem{lemma}[theorem]{Lemma}
\newtheorem{corollary}[theorem]{Corollary}
\newtheorem{conjecture}[theorem]{Conjecture}
\theoremstyle{definition}
\newtheorem{remark}[theorem]{Remark}
\newtheorem{example}[theorem]{Example}
\def\R{\mathbb{R}}
\def\Z{\mathbb{Z}}
\DeclareMathOperator{\Int}{Int}
\DeclareMathOperator{\hht}{ht}
\begin{document}

\title{Lefschetz properties of local face modules}          

\author{Matt Larson and Alan Stapledon}
\date{\today}
\begin{abstract}
Local face modules are modules over face rings whose Hilbert function is the local $h$-vector of a triangulation of a simplex. We study when Lefschetz properties hold for local face modules. We prove new inequalities for local $h$-vectors of vertex-induced triangulations by proving Lefschetz properties for local face modules of these triangulations. We show that, even for regular triangulations, Lefschetz properties can fail for local face modules in positive characteristic. 
\end{abstract}

\maketitle

\vspace{-20 pt}

\section{Introduction}

We study an algebraic object associated to a triangulation of a simplex which was introduced by Stanley \cite{Stanleylocalh} in order to prove results about \emph{local $h$-vectors}, vectors that control how the $h$-vector of a simplicial complex changes under subdivision. 
Local $h$-vectors play an important role in the study of the decomposition theorem for proper toric morphisms \cite{deCataldoMiglioriniMustata18,KatzStapledon16} and in the study of the monodromy action on the cohomology of the Milnor fiber of a Newton nondegenerate hypersurface \cite{Stapledon17,LPS2}.
See \cite{ChanSurvey,AthanasiadisSurvey} for surveys  on local $h$-vectors. 

The properties of local $h$-vectors are heavily dependent on the precise notion of triangulation which is used. The prototypical example of a triangulation of a simplex is a \emph{geometric triangulation}, which is a triangulation of the geometric realization of a simplex $2^V$ into geometric simplices, where $V = \{ 1,\ldots, d\}$ throughout. 
Associated to a geometric triangulation is a map $\sigma \colon \Gamma \to 2^V$ from the abstract simplicial complex associated to the triangulation to the simplex, which takes a face of $\Gamma$ to the smallest face of $2^V$ containing it. 

We will consider much more general notions of triangulations of simplices. 
A \emph{topological triangulation} of a simplex is an order-preserving  map $\sigma \colon \Gamma \to 2^V$ of posets from an abstract simplicial complex $\Gamma$ to a simplex $2^V$, such that, for every subset $U \subset V$, 
the geometric realization of the subcomplex $\Gamma_U := \sigma^{-1}(2^U)$ of $\Gamma$ is homeomorphic to a $(|U| - 1)$-dimensional ball, and $\sigma^{-1}(U)$ is the set of interior faces of the ball $\Gamma_U$. 
A face $F$ of $\Gamma$ is \emph{interior} if $\sigma(F) = V$. 
See Section~\ref{sec:triang} for a discussion of triangulations of simplices, including the definitions of regular, geometric, vertex-induced, and quasi-geometric triangulations.

The {local $h$-vector} $\ell(\Gamma) = (\ell_0(\Gamma), \dotsc, \ell_d(\Gamma))$ of a topological triangulation $\sigma \colon \Gamma \to 2^V$ of a $(d-1)$-dimensional simplex is defined as an alternating sum of the $h$-vectors of inverse images of simplices in $2^V$, see \eqref{eq:localh}. 
Let $f_i^j$ be the number of faces $G$ of $\Gamma$ with $|G| = i$ and $|\sigma(G)| = j$. Then $\ell_0(\Gamma) = 0$, $\ell_1(\Gamma) = f_1^d$ is the number of interior vertices, and $\ell_2(\Gamma) = f_2^d - f_1^{d-1} - (d-1)f_1^{d}$. See \cite[Example 2.3]{Stanleylocalh} and \eqref{eq:excessformula}. In his paper introducing local $h$-vectors, Stanley established the following fundamental properties. 

\begin{theorem}\cite{Stanleylocalh}\label{thm:localhstructure}
Let $\sigma \colon \Gamma \to 2^V$ be a topological triangulation of a $(d-1)$-dimensional simplex. Then:
\begin{enumerate}
\item The local $h$-vector is symmetric, i.e., $\ell_s(\Gamma) = \ell_{d-s}(\Gamma)$ for all $s$. We have $\ell_0(\Gamma) = 0$ and $\ell_1(\Gamma) \ge 0$.
\item If $\Gamma$ is quasi-geometric, then the local $h$-vector is nonnegative, i.e., $\ell_s(\Gamma) \ge 0$ for all $s$. 
\item If $\Gamma$ is regular, then $\ell(\Gamma)$ is unimodal, i.e., $\ell_0(\Gamma) \le \dotsb \le \ell_{\lfloor d/2 \rfloor}(\Gamma)$. 
\end{enumerate}
\end{theorem}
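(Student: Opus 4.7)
For part (1), start from the defining inclusion-exclusion formula $\ell(\Gamma, x) = \sum_{U \subseteq V} (-1)^{|V \setminus U|} h(\Gamma_U, x)$ and use the topological hypothesis that each $\Gamma_U$ is a triangulated $(|U|-1)$-ball whose interior faces are precisely $\sigma^{-1}(U)$. The Dehn--Sommerville-type reciprocity for triangulated balls expresses $x^{|U|} h(\Gamma_U, 1/x) - h(\Gamma_U, x)$ as an alternating sum of $h$-polynomials of boundary subcomplexes $\Gamma_{U'}$ with $U' \subsetneq U$. Substituting this into the inclusion-exclusion formula and swapping the order of summation over the Boolean lattice $2^V$ collapses all boundary contributions by M\"obius inversion, producing the symmetry $x^d \ell(\Gamma, 1/x) = \ell(\Gamma, x)$. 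The vanishing $\ell_0(\Gamma) = 0$ is immediate by evaluating at $x = 0$, since each $h$-polynomial has constant term $1$ and $\sum_{U \subseteq V} (-1)^{|V \setminus U|} = 0$; while $\ell_1(\Gamma) \ge 0$ follows from the identity $\ell_1(\Gamma) = f_1^d$ quoted in the paragraph before the theorem.

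For part (2), I would aim for a non-negative combinatorial interpretation of $\ell_s(\Gamma)$. The approach is to fix a shelling of $\Gamma$ compatible with the poset map $\sigma$ and to extract from each facet a ``restriction face'' that records exactly the interior vertices that are newly introduced. This would exhibit $\ell(\Gamma, x)$ as a sum of non-negative monomials indexed by facets of $\Gamma$. The quasi-geometric hypothesis enters in one decisive place: it rules out facets whose image under $\sigma$ is a proper subsimplex of $2^V$, which are exactly the configurations in which the attempted monomial decomposition would be forced to contribute negatively.

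For part (3), I would pass to algebraic geometry. A regular triangulation of the simplex arises as the projection of the lower envelope of a lifting of $V$ and gives rise to a projective simplicial toric variety $X_\Gamma$. The local $h$-polynomial can be realized as the Hilbert series of a distinguished graded module over the Stanley--Reisner ring $\k[\Gamma]$, obtained by quotienting by a linear system of parameters coming from $V$ and then singling out the contribution of the interior vertices. Hard Lefschetz for $X_\Gamma$ supplies a degree-one element whose iterated multiplication is injective up to the middle degree of this module; combined with the palindromicity established in part (1), this forces unimodality.

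The hardest step, I expect, is part (2). Part (1) is essentially formal, and part (3) reduces to invoking hard Lefschetz, a well-understood black box in characteristic zero. But part (2) requires constructing an explicit non-negative decomposition valid across the full class of quasi-geometric triangulations, which is strictly larger than either the geometric or the regular class --- precisely the setting in which neither canonical shellings nor toric geometry are available as off-the-shelf tools.
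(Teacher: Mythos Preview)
Your approach to part (1) is sound: the reciprocity-and-M\"obius argument you sketch is a direct route to the palindromicity, and is essentially a concrete unpacking of (a special case of) the Kazhdan--Lusztig--Stanley poset formalism that the paper attributes to Stanley.

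Your approach to part (2), however, diverges from Stanley's and has a genuine gap. Stanley does not argue via shellings; he constructs the \emph{local face module} $L_\theta(\Gamma)$ (Definition~\ref{def:localfacemodule} in the paper) and proves that its Hilbert function equals $\ell(\Gamma)$ whenever a special linear system of parameters exists---which is precisely the ``algebraic construction \ldots\ central to results of this paper'' mentioned immediately after the theorem. Non-negativity is then automatic, since Hilbert functions are non-negative. Your shelling strategy is problematic on two counts. First, quasi-geometric triangulations need not admit shellings compatible with the carrier map $\sigma$; quasi-geometric is strictly weaker than geometric, and this is exactly the regime in which, as you yourself note, canonical shellings are unavailable. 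Second, your reading of the quasi-geometric hypothesis is inaccurate: it does not rule out facets whose carrier is a proper face of $2^V$, but rather forbids faces $G$ all of whose \emph{vertices} are carried inside a proper face of size smaller than $|G|$. There is no known purely combinatorial proof of (2) in this generality; the local face module is the mechanism that makes it work, and the quasi-geometric condition enters precisely as the condition guaranteeing that a special l.s.o.p.\ exists (see \cite[Corollary~4.4]{Stanleylocalh}).

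Your sketch of part (3) is in the right spirit but blurs an important distinction. The variety $X_\Gamma$ is not projective---its fan has support $\mathbb{R}^d_{\ge 0}$---so absolute Hard Lefschetz is not available. What one has instead is a projective toric morphism $f \colon X_\Gamma \to \mathbb{A}^d$ (projectivity coming from the regularity of $\Gamma$). Stanley applies the decomposition theorem to $Rf_*\underline{\mathbb{Q}}_{X_\Gamma}[d]$ and identifies $\ell_k(\Gamma)$ with the multiplicity of the skyscraper sheaf at the origin in the appropriate perversity shift; unimodality then follows from the \emph{relative} Hard Lefschetz theorem for $f$. The paper carries out a refinement of this argument in Section~\ref{sec:char0}.
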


In his paper, Stanley introduced the combinatorial theory of posets now known as Kazhdan--Lustzig--Stanley theory and deduced (1) as a special case.
The proof of (2) uses an algebraic construction which will be central to the results of this paper, and the proof of (3) is obtained by applying the decomposition theorem and the relative Hard Lefschetz theorem \cite{BBDG} to a projective toric morphism. 
The following theorem gives a sort of converse to Theorem~\ref{thm:localhstructure}. 

\begin{theorem}\cite{ChanLocal,JKSS}\label{thm:construction}
Let $\ell = (\ell_0, \dotsc, \ell_d)$ be a vector in $\mathbb{Z}^{d+1}$. Then:
\begin{enumerate}
\item If $\ell$ is symmetric, $\ell_0 = 0$, and $\ell_1 \ge 0$, then there is a topological triangulation $\sigma \colon \Gamma \to 2^V$ of a $(d-1)$-dimensional simplex with $\ell = \ell(\Gamma)$. 
\item If furthermore $\ell$ is nonnegative, then $\Gamma$ can be taken to be quasi-geometric. 
\item If furthermore $\ell$ is unimodal, then $\Gamma$ can be taken to be regular. 
\end{enumerate}
\end{theorem}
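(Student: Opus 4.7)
The plan is to prove all three parts by explicit construction, stratified by the increasing rigidity of the triangulation class (topological $\supset$ quasi-geometric $\supset$ regular). In each case, the strategy is to exhibit a library of ``atomic'' triangulations whose local $h$-vectors are understood, then combine them along interior faces to realize any target satisfying the hypotheses.

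For each $1 \le i \le \lfloor d/2 \rfloor$, I would first construct a regular (hence quasi-geometric) triangulation $\Gamma_i$ of the $(d-1)$-simplex whose local $h$-polynomial equals $t^i + t^{d-i}$ (collapsing to $t^{d/2}$ when $d$ is even and $i = d/2$). A natural candidate for $\Gamma_1$ is the cone from a new interior vertex on $\partial(2^V)$, and for larger $i$ one can take an interior stellar subdivision at the barycenter of a face of dimension $i-1$ in a carefully chosen prior subdivision; the resulting local $h$-polynomial is then computed directly from the inclusion-exclusion definition. I would then introduce a combining operation that adds local $h$-polynomials: substitution of one triangulation of a simplex into the star of an interior face of another, which by Stanley's inclusion-exclusion definition of $\ell$ contributes additively on local $h$-polynomials provided one tracks the effect on each $\Gamma_U$. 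Iterating this operation with appropriate multiplicities of the $\Gamma_i$ produces, for any unimodal target satisfying the hypotheses of (3), a regular triangulation with the required local $h$-vector. For (2), drop the regularity of the building blocks and allow merely quasi-geometric atoms, enlarging the reachable cone to all non-negative symmetric vectors with $\ell_0 = 0$ and $\ell_1 \ge 0$.

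For (1), to realize local $h$-vectors with negative middle entries (permitted topologically by the symmetry statement of Theorem~\ref{thm:localhstructure}(1)), I would exploit the extra flexibility of the topological category: the map $\sigma \colon \Gamma \to 2^V$ is merely order-preserving, so distinct faces of $\Gamma$ can have the same image in $2^V$. The goal is to construct, for each $1 < j < d/2$, a topological building block whose local $h$-polynomial is $-t^j - t^{d-j}$, by attaching a pseudomanifold-like ``bubble'' along an interior face that decreases some $h_s(\Gamma_U)$ by $1$ in the correct degree, while keeping each $\Gamma_U$ a topological ball with $\sigma^{-1}(U)$ its interior faces. Starting from a non-negative realization of the positive envelope of the target and attaching such negative blocks then yields any symmetric $\ell$ with $\ell_0 = 0$ and $\ell_1 \ge 0$.

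The main obstacle is twofold. First, verifying that the combining operation really adds local $h$-polynomials requires careful inclusion-exclusion across all $U \subseteq V$, not only $U = V$, so the effect on $h(\Gamma_U; t)$ must be understood uniformly; this step forces constraints on \emph{where} a new piece may be attached. Second, the negative-entry building blocks for part (1) are the most delicate point: ensuring that after attaching such a bubble, every $\Gamma_U$ remains a topological ball with $\sigma^{-1}(U)$ its interior faces is exactly the topological constraint that makes the construction work in the topological but not the quasi-geometric category, and this ball-preservation check is the heart of the argument.
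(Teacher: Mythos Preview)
The paper does not prove this theorem; it is quoted from \cite{ChanLocal} and \cite{JKSS} and stated without proof, so there is no argument in the paper to compare against. Assessing your proposal on its own: the overall architecture (assemble atoms with known local $h$-vectors, then combine) is indeed how the cited references proceed, but your specific implementation has genuine gaps.

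Your atomic blocks are mis-computed. Coning $\partial(2^V)$ from a single interior vertex has local $h$-polynomial $t + t^2 + \cdots + t^{d-1}$, not $t + t^{d-1}$; already for $d=4$ these differ. A stellar subdivision at an interior $(i-1)$-face of ``a carefully chosen prior subdivision'' is not a construction until the prior subdivision is specified, and you give no calculation showing the result is $t^i + t^{d-i}$.

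The combining step is the heart of the argument and is left as an intention. ``Substitution into the star of an interior face contributes additively'' is false in general: Stanley's subdivision formula weights each contribution by the link $h$-polynomial, and after inclusion--exclusion over $U \subseteq V$ this does not collapse to a bare sum. What \emph{does} work is to subdivide an interior \emph{facet} $F$ (so $|F|=d$, $\sigma(F)=V$, and $\lk_\Gamma F = \{\emptyset\}$) by a triangulation $\Gamma'$ whose boundary agrees with $\partial F$: then no $\Gamma_U$ with $U\subsetneq V$ changes, and $\ell(\Gamma;t)$ increases by exactly $\ell(\Gamma';t)$ relative to $F$. But you still need to (i) verify that an interior facet is available at every stage, (ii) in part (3), check that regularity is preserved under this gluing, and (iii) actually exhibit atoms $\Gamma'$ realizing each $t^i + t^{d-i}$. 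None of these is addressed.

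For part (1), ``attach a pseudomanifold-like bubble that decreases some $h_s(\Gamma_U)$'' is a hope, not a construction. You have not produced a single topological triangulation with a negative local $h$-entry, and the ball-preservation check you yourself flag as ``the heart of the argument'' is exactly the missing content.
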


Taken together, Theorems~\ref{thm:localhstructure} and \ref{thm:construction} characterize the possible local $h$-vectors of topological triangulations, quasi-geometric triangulations, and regular triangulations. 
Stanley had originally conjectured that the unimodality of local $h$-vectors could be extended to all quasi-geometric subdivisions \cite[Conjecture 5.4]{Stanleylocalh}, but a counterexample 
was found in \cite[Example 3.4]{AthanasiadisFlag}. Athanasiadis asked if the unimodality still holds for vertex-induced triangulations of simplices \cite[Question 3.5]{AthanasiadisFlag}. We give the first result towards this conjecture. 

\begin{theorem}\label{thm:combineq}
Let $\sigma \colon \Gamma \to 2^V$ be a vertex-induced triangulation of a $(d-1)$-dimensional simplex with $d \ge 3$. Then $\ell_1(\Gamma) \le \ell_2(\Gamma)$. 
\end{theorem}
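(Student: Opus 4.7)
The plan is to deduce $\ell_1(\Gamma) \le \ell_2(\Gamma)$ from a weak-Lefschetz-type injectivity for Stanley's local face module $L_\Gamma$---the algebraic construction underlying the proof of Theorem~\ref{thm:localhstructure}(2). Recall that $L_\Gamma$ is a graded module over a quotient of the face ring of $\Gamma$ with $\dim_\k (L_\Gamma)_s = \ell_s(\Gamma)$, and $(L_\Gamma)_1$ has a natural basis indexed by the $f_1^d$ interior vertices of $\Gamma$. It therefore suffices to produce a linear form $\theta$ such that multiplication $\cdot\, \theta \colon (L_\Gamma)_1 \to (L_\Gamma)_2$ is injective.

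I would take $\theta = \sum_v c_v x_v$ with generic coefficients. For an interior vertex $v$, $\theta \cdot [x_v] = \sum_w c_w [x_v x_w]$ where $w$ runs over neighbors of $v$ in $\Gamma$, and each such edge $\{v, w\}$ has carrier $V$ (since $\sigma(v) = V$). The combinatorial inputs making this plausible are: (i) for each interior $v$, the link $\lk_\Gamma(v)$ is a triangulation of $S^{d-2}$ and hence has at least $d$ vertices, giving $v$ at least $d$ incident edges of carrier $V$; and (ii) for each vertex $u$ with $|\sigma(u)| = d-1$, the link $\lk_\Gamma(u)$ is a $(d-2)$-ball strictly containing the $(d-3)$-sphere $\lk_{\Gamma_{\sigma(u)}}(u)$ (the assumption $d \ge 3$ is used to give these different dimensions, and the vertex-induced hypothesis rules out the degenerate facet whose presence would force $\Star_\Gamma(u) \subseteq \Gamma_{\sigma(u)}$), yielding a neighbor $w$ of $u$ with $V \setminus \sigma(u) \subseteq \sigma(w)$ and hence an incident edge of carrier $V$.

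The main obstacle is promoting these local incidences into genuine injectivity. A direct double count only yields $2 f_2^d \ge d f_1^d + f_1^{d-1}$, a factor of two short of the required inequality $f_2^d \ge d f_1^d + f_1^{d-1}$, so the proof must run through the algebra: one must rule out a nontrivial $\xi = \sum_{v \text{ interior}} a_v [x_v]$ with $\theta \xi = 0$ in $(L_\Gamma)_2$. The vertex-induced hypothesis $\sigma(F) = \bigcup_{v \in F} \sigma(v)$ must enter essentially here---since the counterexample in~\cite[Example 3.4]{AthanasiadisFlag} rules out the inequality in the broader quasi-geometric setting---presumably by constraining the degree-$2$ relations of $L_\Gamma$ to respect the vertex carrier stratification in a way that precludes such a cancellation, and verifying this rigidity is where I expect the bulk of the technical work to lie.
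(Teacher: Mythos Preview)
Your high-level strategy is exactly the paper's: deduce $\ell_1 \le \ell_2$ from injectivity of multiplication by a generic linear form on $L^1(\Gamma)$. But as you yourself concede, your proposal does not contain a proof of that injectivity. The combinatorial observations about links and the double count $2f_2^d \ge d f_1^d + f_1^{d-1}$ are correct but, as you note, fall short; and the final paragraph is a statement of what remains to be done rather than an argument. So there is a genuine gap: the entire technical content of the theorem lies in the step you have left open.

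The paper's mechanism for closing this gap is quite different from anything you sketch. It works first over a field of characteristic~$2$ with a \emph{generic special} l.s.o.p., and uses the Papadakis--Petrotou differential operator technique together with the Karu--Xiao identity. Concretely: one equips $L(\Gamma)$ with Stanley's nondegenerate bilinear form $B$, realized via the Poincar\'e pairing on the cone $\hat\Gamma$; given nonzero $u \in L^1(\Gamma)$, nondegeneracy produces an interior face $G$ of size $d-1$ with $B(u,x^G)\neq 0$. The vertex-induced hypothesis enters via Hall's marriage theorem: since $|S| \le |\sigma(S)|$ for every $S \subseteq G$, one gets an injection $\iota\colon G \to V$ with $\iota(j)\in\sigma(j)$, which is used to build a ``valid'' index matrix $I$ with $x^I = x^G \cdot x_{\bar j}$. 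The Karu--Xiao identity (in characteristic~$2$) then yields
\[
\partial^I B(\ell \cdot u,\, x^J \cdot u) \;=\; B(u, x^G)^2 \;\neq\; 0,
\]
forcing $\ell\cdot u \neq 0$. The characteristic~$0$ case follows by reducing the relevant matrix of pairings modulo~$2$. None of this machinery---the bilinear form $B$, the differential operators $\partial^I$, the characteristic~$2$ square-root identity, or the Hall matching---appears in your proposal, and it is precisely this package that supplies the ``rigidity'' you were looking for.
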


In particular, if $d \le 5$, then Theorem~\ref{thm:combineq} and the symmetry of the local $h$-vector imply that the local $h$-vector is unimodal. Together with the construction in \cite{ChanLocal}, we have the following classification of the local $h$-vectors of vertex-induced triangulations. 

\begin{corollary}
Let $\ell = (\ell_0, \dotsc, \ell_d)$ be a vector in $\mathbb{Z}^{d+1}$ for some $d \le 5$. Then $\ell$ is the local $h$-vector of a vertex-induced triangulation of a $(d-1)$-dimensional simplex if and only if $\ell_0 = 0$, and $\ell$ is symmetric, nonnegative, and unimodal. 
\end{corollary}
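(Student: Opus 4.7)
The plan is to prove the two directions separately. Necessity combines the basic structural facts of Theorem~\ref{thm:localhstructure} with Theorem~\ref{thm:combineq}, together with the observation that vertex-induced triangulations are quasi-geometric. Sufficiency follows immediately from Theorem~\ref{thm:construction}(3) together with the fact that the regular triangulations produced there are in particular vertex-induced.

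For necessity, let $\sigma \colon \Gamma \to 2^V$ be a vertex-induced triangulation of a $(d-1)$-simplex with $d \le 5$. Theorem~\ref{thm:localhstructure}(1) supplies $\ell_0(\Gamma) = 0$ and the symmetry $\ell_s(\Gamma) = \ell_{d-s}(\Gamma)$. Since vertex-induced implies quasi-geometric (as recalled in Section~\ref{sec:triang}), Theorem~\ref{thm:localhstructure}(2) yields non-negativity, $\ell_s(\Gamma) \ge 0$. For unimodality I would argue by cases on $d$. The cases $d \le 2$ are immediate, since the local $h$-vector has length at most $3$ with $\ell_0(\Gamma) = 0$. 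When $d = 3$, symmetry forces $\ell(\Gamma) = (0, \ell_1(\Gamma), \ell_1(\Gamma), 0)$, which is unimodal as soon as $\ell_1(\Gamma) \ge 0$. When $d \in \{4, 5\}$, symmetry compresses the unimodality chain $\ell_0 \le \dotsb \le \ell_{\lfloor d/2 \rfloor}$ to the single nontrivial inequality $\ell_1(\Gamma) \le \ell_2(\Gamma)$, which is exactly Theorem~\ref{thm:combineq} (applicable since $d \ge 3$).

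For sufficiency, let $\ell = (\ell_0, \dotsc, \ell_d)$ with $d \le 5$ be symmetric, non-negative, and unimodal with $\ell_0 = 0$. I would invoke Theorem~\ref{thm:construction}(3), due to \cite{ChanLocal}, to produce a regular triangulation $\sigma \colon \Gamma \to 2^V$ of a $(d-1)$-simplex with $\ell(\Gamma) = \ell$. As recalled in Section~\ref{sec:triang}, a regular triangulation of a simplex is geometric and hence vertex-induced, so $\Gamma$ witnesses the desired realization.

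The only substantive obstacle is the unimodality step in necessity, and this is discharged entirely by the preceding Theorem~\ref{thm:combineq}. Everything else is either immediate bookkeeping from Theorem~\ref{thm:localhstructure} or an appeal to the construction summarized in Theorem~\ref{thm:construction}, so once Theorem~\ref{thm:combineq} is in hand the corollary is essentially a packaging statement.
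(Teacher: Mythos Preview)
Your proposal is correct and matches the paper's own argument: the paper states the corollary immediately after Theorem~\ref{thm:combineq}, deriving unimodality for $d \le 5$ from that inequality plus symmetry, and citing the construction of \cite{ChanLocal} (i.e., Theorem~\ref{thm:construction}(3)) for the converse direction.
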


The same result holds for geometric triangulations. The proof of Theorem~\ref{thm:combineq} is based on a Lefschetz result for \emph{local face modules}, graded modules associated to quasi-geometric triangulations whose Hilbert function is the local $h$-vector of the triangulation. We now recall the definition of local face modules.

Let $\sigma \colon \Gamma \to 2^V$ be a quasi-geometric triangulation of a $(d-1)$-dimensional simplex. 
Let $k$ be a field, and let $k[\Gamma]$ be the \emph{face ring} (or \emph{Stanley--Reisner ring}) of $\Gamma$. Let $k[\Gamma]^s$ be the degree $s$ part of $k[\Gamma]$. 
A linear system of parameters (l.s.o.p.) 
$ \theta = \{ \theta_1,\ldots,\theta_d \}$ with $\theta_i \in k[\Gamma]^1$
is \emph{special} if $\theta_i$ is supported on vertices $j$ of $\Gamma$ with $i \in \sigma(j)$. 
Let 
$\theta = \{ \theta_1,\ldots,\theta_d \}$ be a \emph{special l.s.o.p.} for $k[\Gamma]$.   Let $(\operatorname{int} \Gamma) = (x^G : \sigma(G) = V)$ be the ideal of interior faces in $k[\Gamma]$, and  let $A_{\theta}(\Gamma) = k[\Gamma]/(\theta_1, \dotsc, \theta_d)$.

\begin{definition}\label{def:localfacemodule}
For a special l.s.o.p. 
$\theta = \{ \theta_1,\ldots,\theta_d \}$,
the local face module $L_{\theta}(\Gamma)$ is the image of $(\operatorname{int} \Gamma)$ in
$A_{\theta}(\Gamma)$. 
\end{definition}

The local face module is a module over $k[\Gamma]$. If $k$ is infinite, then a triangulation is quasi-geometric if and only there is a special l.s.o.p. for $k[\Gamma]$, see \cite[Corollary 4.4]{Stanleylocalh}. The local face module inherits a grading $L_{\theta}(\Gamma) = \oplus_{s = 0}^d  L^s_{\theta}(\Gamma)$ 
from $k[\Gamma]$. 
In \cite[Theorem 4.6]{Stanleylocalh}, Stanley showed that, for any special l.s.o.p., 
$$\dim L^s_{\theta}(\Gamma) = \ell_s(\Gamma).$$
This proves the nonnegativity of the local $h$-vector of a quasi-geometric triangulation. 

We will usually work with \emph{generic} special l.s.o.p.s, i.e., l.s.o.p.s where the nonzero coefficients are algebraically independent. Let $K = k(a_{i,j})_{1 \le j \le n, \, i \in \sigma(j)}$, and set $\theta_i^{\operatorname{gen}} = \sum_{\sigma(j) \ni i} a_{i,j} x_j \in K[\Gamma]^1$. Let $L(\Gamma) := L_{\theta^{\operatorname{gen}}}(\Gamma)$ be the local face module constructed using a generic special l.s.o.p. Note that $L(\Gamma)$ depends on the choice of a field $k$. Set $\ell = x_1 + \dotsb + x_n \in K[\Gamma]^1$. 
Theorem~\ref{thm:combineq} is an immediate consequence of the following Lefschetz theorem for local face modules. 

\begin{theorem}\label{thm:WL}
Let $\sigma \colon \Gamma \to 2^V$ be a vertex-induced triangulation of a $(d-1)$-dimensional simplex with $d \ge 3$. If $k$ has characteristic $2$ or $0$, then multiplication by $\ell$ induces an injection from $L^1(\Gamma)$ to $L^2(\Gamma)$. 
\end{theorem}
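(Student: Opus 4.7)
\noindent\textit{Proof plan.} The plan is a direct analysis of the multiplication map $\ell \cdot - \colon L^1(\Gamma) \to L^2(\Gamma)$ in an explicit basis, reducing injectivity to the non-vanishing of a polynomial identity in the generic coefficients $a_{i,j}$.

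First I would fix a convenient presentation. Since $\dim L^1(\Gamma) = \ell_1(\Gamma)$ equals the number of interior vertices, the images in $A_\theta(\Gamma)$ of the monomials $\{x_v : \sigma(v) = V\}$ form a basis of $L^1(\Gamma)$; for $L^2(\Gamma)$ I would use the natural spanning set of degree-two interior monomials modulo the degree-two l.s.o.p.\ relations. The vertex-induced hypothesis gives a crucial simplification: for any interior vertex $v$ and any face $\{v, w\} \in \Gamma$, one has $\sigma(\{v, w\}) \supseteq \sigma(v) = V$, so every edge incident to $v$ is automatically an interior face, and
\[
\ell \cdot x_v \;=\; x_v^2 \,+\, \sum_{w \ne v,\ \{v,w\} \in \Gamma} x_v x_w
\]
lies in $(\operatorname{int}\Gamma)^2$ without further adjustment.

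Second, I would encode injectivity as a rank statement. If $y = \sum_v c_v x_v$ lies in the kernel, then $\ell \cdot y \in (\theta_1, \ldots, \theta_d) \cdot k[\Gamma]$ in degree two. Using the generic special l.s.o.p. relations $\theta_i = \sum_{\sigma(j) \ni i} a_{i,j} x_j$ and the vertex-induced structure of $\Gamma$, I would rewrite this containment as a linear system in the $c_v$'s over the field $K = k(a_{i,j})$. Injectivity is then equivalent to the non-vanishing of an appropriate $\ell_1(\Gamma) \times \ell_1(\Gamma)$ minor of this matrix, which is a polynomial in the $a_{i,j}$'s whose entries can be read off combinatorially from $\sigma$.

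The main obstacle, and the crux of the proof, is establishing the non-vanishing of this polynomial exactly when $\operatorname{char}(k) \in \{0, 2\}$. In characteristic $0$, I would hope to specialize the $a_{i,j}$'s and leverage the unimodality statement of Theorem~\ref{thm:localhstructure}(3) (or its algebraic source in the decomposition theorem) to certify non-vanishing at a distinguished point, extending by Zariski openness of the injectivity locus. In characteristic $2$, the Frobenius identity $(x+y)^2 = x^2 + y^2$ should collapse the polynomial to a sum-of-squares expression amenable to a direct combinatorial witness argument using the vertex data. Pinpointing the exact algebraic identity that handles both characteristics simultaneously---and confirming that it genuinely fails in odd positive characteristic, consistent with the failures of Lefschetz in positive characteristic that the authors exhibit elsewhere in the paper---is where the bulk of the work lies.
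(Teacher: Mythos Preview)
Your proposal is an outline rather than a proof, and both of its load-bearing steps have genuine gaps.

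For characteristic $0$, you propose to specialize the $a_{i,j}$ and invoke Theorem~\ref{thm:localhstructure}(3) or the decomposition theorem. But those results apply only to \emph{regular} triangulations, whereas the statement is for vertex-induced triangulations, a strictly larger class. There is no specialization of the generic special l.s.o.p.\ that produces a regular structure out of a merely vertex-induced one, so this route is unavailable. In fact the paper's characteristic~$0$ argument goes the other way: it first proves the characteristic~$2$ case and then lifts by noting that the relevant rank matrix (rows indexed by interior vertices, columns by interior $(d{-}2)$-faces, entries $B(\ell\cdot x_j, x^G)$) over $\mathbb{Q}(a_{i,j})$ has rank at least that of its reduction mod~$2$.

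For characteristic $2$, saying that Frobenius ``should collapse the polynomial to a sum-of-squares expression'' is not yet an argument. The actual mechanism is the Papadakis--Petrotou differential operator technique, made precise by the Karu--Xiao identity: for $u \in L^s(\Gamma)$, a valid $d\times n$ matrix $I$, and suitable $J$,
\[
\partial^I B(u, x^J \cdot u) \;=\; B\bigl(u,\sqrt{x^I\cdot x^J}\bigr)^2.
\]
Given nonzero $u\in L^1(\Gamma)$, one uses nondegeneracy of $B$ and Lemma~\ref{lem:squarefree} to find an interior face $G$ of size $d-1$ with $B(u,x^G)\neq 0$, then applies Hall's marriage theorem (using the vertex-induced hypothesis to verify the marriage condition) to build a valid $I$ with $x^I = x^G\cdot x_{\bar j}$. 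A short computation with the identity above then gives $\partial^I B(\ell\cdot u, x^J\cdot u) = B(u,x^G)^2 \neq 0$, forcing $\ell\cdot u \neq 0$. None of this machinery---the bilinear form $B$, the differential operators $\partial^I$, the Karu--Xiao identity, or the Hall-matching step that exploits vertex-inducedness---appears in your plan, and it is exactly what makes the proof go through.
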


We say that $L(\Gamma)$ has the \emph{strong Lefschetz property} if there is $u \in K[\Gamma]^1$ such that, for each $s \le d/2$, multiplication by $u^{d- 2s}$ induces an isomorphism from $L^s(\Gamma)$ to $L^{d-s}(\Gamma)$. A standard argument (see, e.g., \cite[Lemma 5.1]{LNS}) shows that $L(\Gamma)$ has the strong Lefschetz property if and only if multiplication by $\ell^{d- 2s}$ induces an isomorphism from $L^s(\Gamma)$ to $L^{d-s}(\Gamma)$ for all $s \le d/2$. 
Equivalently, $L(\Gamma)$ has the strong Lefschetz property if and only if the \emph{Hodge--Riemann form} on $L^s(\Gamma)$ given by $(u, v) \mapsto B(u, \ell^{d - 2s} \cdot v)$ is nondegenerate  for all $s \le d/2$. Here $B$ is a natural symmetric bilinear form on $L(\Gamma)$ first described by Stanley 
\cite[Corollary 4.19]{Stanleylocalh}; see Section~\ref{ssec:bilinear}. 
Recall that a bilinear form $b$ is \emph{anisotropic} if $b(x,x)$ is nonzero whenever $x$ is nonzero. This implies that $b$ is nondegenerate.

\begin{theorem}\label{thm:SL}
Let $\sigma \colon \Gamma \to 2^V$ be a vertex-induced triangulation of a $(d-1)$-dimensional simplex with $d \le 4$. If $k$ has characteristic $2$ or $0$, then for any $s \le d/2$, the Hodge--Riemann form on $L^s(\Gamma)$ is anisotropic. In particular, $L(\Gamma)$ has the strong Lefschetz property. 

\end{theorem}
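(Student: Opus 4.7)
Since anisotropy of a bilinear form implies non-degeneracy and non-degeneracy of the Hodge--Riemann form is equivalent to the strong Lefschetz property, I focus on proving the anisotropy statement. Because $L^0 = 0$ by Theorem \ref{thm:localhstructure}(1), the $s = 0$ case is vacuous, and the remaining cases are $(d, s) \in \{(2, 1), (3, 1), (4, 1), (4, 2)\}$. Before attacking them I would attempt to reduce $(d, s) = (4, 2)$ to the $s = 1$ case: the identity $B(\ell u, \ell v) = B(u, \ell^2 v)$ shows that $B|_{L^2}$ restricted to $\ell \cdot L^1 \subseteq L^2$ equals the pullback along $\ell$ of the $s = 1$ Hodge--Riemann form, so anisotropy on $\ell \cdot L^1$ would follow from anisotropy on $L^1$. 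If $\ell \colon L^1 \to L^2$ happens to be surjective this finishes $(4,2)$; otherwise I would look for a $B$-orthogonal decomposition $L^2 = \ell \cdot L^1 \oplus N$ and treat the complement $N$ via the structure of interior $2$-faces that do not arise by multiplying an interior vertex into the ring.

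For the $s = 1$ cases, $L^1$ is spanned by $\{x_v : v \text{ interior vertex}\}$ and the Hodge--Riemann form is $Q(u) = B(u, \ell^{d-2} u)$. Theorem \ref{thm:WL} gives the injectivity of $\ell \colon L^1 \to L^2$, which together with Poincar\'e duality via $B$ implies non-degeneracy of $Q$. Strengthening non-degeneracy to anisotropy is the principal new content. The strategy I would pursue is induction on the number of interior vertices: for a carefully chosen interior vertex $v$, construct a $Q$-orthogonal decomposition $L^1(\Gamma) = \langle x_v' \rangle \oplus W$, where $x_v'$ is a modification of $x_v$ and $W$ is identified with the $L^1$ of a smaller vertex-induced triangulation $\Gamma'$. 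Anisotropy on $W$ would follow by induction, and anisotropy on $\langle x_v' \rangle$ reduces to the non-vanishing of the single scalar $Q(x_v') \in K$, which one verifies using the genericity of the coefficients $\{a_{i,v}\}_{i \in \sigma(v)}$.

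The main obstacle is constructing the orthogonal decomposition above, in particular identifying the complement $W$ with the $L^1$ of a smaller triangulation in a way compatible with $B$. The characteristic hypothesis plays two distinct roles: it is inherited from Theorem \ref{thm:WL} via the injectivity of $\ell$, and it governs which algebraic tools are available to pass from non-degeneracy to anisotropy. In characteristic $2$, the cross-terms in $Q(u + v) = Q(u) + Q(v) + B(u, \ell^{d-2} v) + B(v, \ell^{d-2} u)$ cancel by the symmetry of $B$ together with the module identity $B(u, \ell^{d-2} v) = B(v, \ell^{d-2} u)$, so $Q$ is additive and $Q(c u) = c^2 Q(u)$; anisotropy is then equivalent to the $K^2$-linear independence of $\{Q(x_v) : v \text{ interior}\}$ in $K = k(a_{i,j})$, which one can attempt to verify by a direct analysis of the polynomial supports of the $Q(x_v)$. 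In characteristic $0$, an alternative route would be to specialize the generic coefficients to those of a regular triangulation, where the Hodge--Riemann relations are available from the toric geometry used in the proof of Theorem \ref{thm:localhstructure}(3), and transport anisotropy back to the generic setting by a continuity or flatness argument.
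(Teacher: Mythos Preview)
Your proposal is a plan rather than a proof, and it misses the central technique the paper actually uses. The key tool is the Papadakis--Petrotou/Karu--Xiao differential identity (Corollary~\ref{cor:KXidentity}): for a valid matrix $I$ and suitable $J$, one has $\partial^I B(u, x^J u) = B(u, \sqrt{x^I x^J})^2$. This lets one prove $B(u, \ell^{d-2s} u)\neq 0$ by exhibiting a single differential operator that does not annihilate it. For the middle-degree cases $(d,s)\in\{(2,1),(4,2)\}$ the paper shows, via Lemma~\ref{lem:generation} and Hall's marriage theorem, that $L^{d/2}$ is spanned by monomials $x^J$ for which a valid $I$ with $x^I=(x^J)^2$ exists; then $\partial^I B(u,u)=B(u,x^J)^2$ is nonzero for some such $J$ by nondegeneracy of $B$. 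The case $(d,s)=(3,1)$ is handled by a direct application of the same identity using Corollary~\ref{cor:dminus1gen}. Crucially, the paper does the cases in the \emph{opposite} order from you: it proves $(4,2)$ first and then deduces $(4,1)$ by noting that $B(\ell^2 u,u)=B(\ell u,\ell u)$ and $\ell u\neq 0$ by Theorem~\ref{thm:WL}. Your attempt to go from $(4,1)$ to $(4,2)$ founders exactly where you say it does: $\ell\colon L^1\to L^2$ is almost never surjective (indeed $\ell_1\le \ell_2$), and you give no mechanism for handling the complement $N$.

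Your inductive scheme for $s=1$ (peel off an interior vertex, identify the orthogonal complement with a smaller triangulation) is not carried out and there is no reason to expect such an identification exists in general; the paper does nothing of the sort. Your characteristic~$2$ observation that $Q$ is additive is correct and is morally what underlies the differential-operator method, but ``$K^2$-linear independence of the $Q(x_v)$'' is not something you prove, and the paper's route via $\partial^I$ is what makes this tractable. Finally, your characteristic~$0$ strategy of specializing to a regular triangulation cannot work: the triangulation $\Gamma$ is fixed and need not be regular, so no specialization of the l.s.o.p.\ coefficients will produce a situation where toric Hodge--Riemann applies. The paper instead lifts anisotropy from characteristic~$2$ to characteristic~$0$ by a $2$-adic valuation argument.
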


In contrast to  Theorems~\ref{thm:WL} and \ref{thm:SL}, 
we provide 
counterexamples to Lefschetz properties of local face modules in positive characteristic. We say that $L(\Gamma)$ has the \emph{weak Lefschetz property} if, for any $s$, there exists $u \in K[\Gamma]^1$ such that the map
$L^s(\Gamma) \to L^{s+1}(\Gamma)$ induced by multiplication by $u$ has full rank. 
In Example~\ref{ex:counterexamples} we construct a family of regular  triangulations $\{\Gamma_{t}\}_{t \in \mathbb{N}}$ of a $(3t-1)$-dimensional simplex for which, for any special l.s.o.p. $\theta$,
\begin{itemize}
		\item For any $p > 0$, the strong Lefschetz property for $L_{\theta}(\Gamma_p)$ fails in characteristic $p$.
	\item For any $p > 0$, the weak Lefschetz property for $L_{\theta}(\Gamma_{2p - 1})$ fails in characteristic $p$. 
	\item For any $p$ (including $p = 0$) and $t > 1$, 
	the Hodge--Riemann form on $L^{t + 1}(\Gamma_t)_{\theta}$ is not anisotropic. 
\end{itemize}
We construct our example by
using the fact that Lefschetz properties are not preserved by tensor products in positive characteristic.

Our proofs of Theorem~\ref{thm:WL} and \ref{thm:SL} are based on the differential operator technique of Papadakis and Petrotou \cite{PapadakisPetrotoug}, which relies on certain identities that hold only in characteristic $2$, and also relies on the use of an l.s.o.p. that is `quite' generic. This technique has had many applications over the last few years \cite{APP,APPS,KaruXiao,Oba,LNS,KLS,moment}.
In particular, it has been used to show that analogues of Theorems~\ref{thm:WL} and \ref{thm:SL} hold for face rings of triangulations of spheres without any restriction on the dimension. 
On the other hand, for applications to local face modules,  the ``special'' condition forces the l.s.o.p. used to define $L(\Gamma)$ to be highly non-generic, making  the use of the differential operator technique difficult. In fact, the counterexamples above 
give a limitation on this technique. 
In the setting of Ehrhart theory, similar counterexamples obstruct attempts to prove the unimodality of 
local $h^*$-vectors
of certain lattice polytopes using the differential operators technique, see Example~\ref{ex:ehrhart}. Despite these counterexamples, we conjecture that the strong Lefschetz property holds for local face modules of vertex-induced triangulations over a field of characteristic $0$ if a generic special l.s.o.p. is used.

\begin{conjecture}\label{conj:char0}
Let $\sigma \colon \Gamma \to 2^V$ be a vertex-induced triangulation of a simplex. If $k$ has characteristic $0$, then $L(\Gamma)$ has the strong Lefschetz property. 
\end{conjecture}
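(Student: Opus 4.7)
The plan is to split Conjecture~\ref{conj:char0} into the regular case, where Hodge theory for a toric morphism is available, and the general vertex-induced case where it is not. For a regular triangulation $\sigma \colon \Gamma \to 2^V$, the dual subdivision of the positive orthant gives a relatively projective toric morphism $\pi \colon X_\Gamma \to \mathbb{A}^d$. By \cite{deCataldoMiglioriniMustata18, KatzStapledon16}, the decomposition theorem for $R\pi_* IC_{X_\Gamma}$ contains a summand supported at the origin whose Poincar\'e polynomial is $\sum_s \ell_s(\Gamma) t^s$, and relative Hard Lefschetz on this summand is exactly a strong Lefschetz statement with respect to a relatively ample class. The first step is to identify this cohomological summand with $L(\Gamma)$ as a graded module: over $\mathbb{C}$ the face ring $\mathbb{C}[\Gamma]$ presents the equivariant cohomology of $X_\Gamma$, a special l.s.o.p.\ corresponds to pulling back linear coordinates on $\mathbb{A}^d$ along $\pi$, the quotient $A_\theta(\Gamma)$ recovers the ordinary cohomology, and the image of the interior-face ideal picks out precisely those classes supported on the central fiber of $\pi$. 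Making this dictionary precise should identify $L(\Gamma)$ (for a suitable special l.s.o.p.) with the cohomology of the $IC_{\{0\}}$ summand, with a relatively ample class playing the role of the Lefschetz operator; standard semicontinuity would then promote the strong Lefschetz property from this specific geometric l.s.o.p.\ to a generic special one, over any field of characteristic zero.

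The second step, and the main obstacle, is to handle non-regular vertex-induced triangulations, where no ambient projective toric morphism is available. One natural strategy is to deform through a family of vertex-induced triangulations with constant local $h$-vector to a regular one and transport strong Lefschetz by upper semicontinuity, but it is unclear whether such deformations exist in general. An alternative is an Adiprasito-style inductive scheme: if $v$ is an interior vertex of a vertex-induced $\Gamma$, then the link of $v$ inherits the structure of the local face module of a lower-dimensional induced subtriangulation, and one could attempt to build a Hodge--Riemann bilinear form on $L(\Gamma)$ inductively from forms on these links together with a McMullen-style flip argument, using the pairing $B$ of Section~\ref{ssec:bilinear} to package the induction.

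Any successful approach must genuinely use characteristic zero: the counterexamples of Example~\ref{ex:counterexamples} rule out lifting the Papadakis--Petrotou differential-operator technique used in \cite{APP,APPS,LNS,KLS} to the local face module setting in positive characteristic, so one cannot first prove the conjecture in characteristic $2$ and then lift. This suggests that any proof should proceed through the full Hodge--Riemann bilinear relations rather than a single anisotropy identity. A natural first test would be to carry out the toric argument above for all regular vertex-induced triangulations, and then to push Theorem~\ref{thm:SL} beyond dimension $4$ in the vertex-induced setting, where the inductive strategy via links of interior vertices has the best chance of succeeding.
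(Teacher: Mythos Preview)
This statement is Conjecture~\ref{conj:char0}, which the paper explicitly leaves open; there is no proof in the paper for you to be compared against. What the paper does prove is the special case of regular triangulations (Theorem~\ref{thm:regular}), and your first step is essentially that argument: apply the decomposition theorem to the projective toric morphism $X_{\hat\Gamma}\to\mathbb{P}^d$, identify $L_\theta(\Gamma)$ with the summand supported at the origin via Lemma~\ref{lem:annlocalface}, invoke relative Hard Lefschetz, and then pass from the geometric special l.s.o.p.\ to the generic one by semicontinuity of rank. So for the regular case your plan and the paper's proof coincide.

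For the general vertex-induced case your proposal is not a proof but a sketch of possible attacks, and you are candid about this. The paper offers no proof here either; it states the conjecture and notes the obstructions (the special l.s.o.p.\ is too non-generic for the Papadakis--Petrotou technique, and Example~\ref{ex:counterexamples} blocks a characteristic-$p$ approach). Your two suggested strategies---deforming to a regular triangulation within a family with constant local $h$-vector, or an Adiprasito-style link induction---are reasonable research directions but neither is carried out, and the paper does not pursue them. In particular, there is no known mechanism guaranteeing that an arbitrary vertex-induced triangulation admits such a deformation, and the link of an interior vertex in a vertex-induced triangulation need not itself carry a vertex-induced triangulation structure in a way that makes the induction close up. So your proposal correctly locates the gap but does not fill it; that gap is exactly why the statement is recorded as a conjecture.
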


We prove Conjecture~\ref{conj:char0} for regular triangulations. 

\begin{theorem}\label{thm:regular}
Let $\sigma \colon \Gamma \to 2^V$ be a regular triangulation of a simplex. If $k$ has characteristic $0$, then $L(\Gamma)$ has the strong Lefschetz property. 
\end{theorem}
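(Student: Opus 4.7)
The plan is to adapt Stanley's geometric proof of unimodality of local $h$-vectors for regular triangulations (Theorem~\ref{thm:localhstructure}(3)) to the level of modules, using the full relative Hard Lefschetz theorem instead of merely its numerical consequence. Since the strong Lefschetz property is preserved under flat base extension of the coefficient field, I may assume $k = \Q$. A regular triangulation $\sigma : \Gamma \to 2^V$ determines a simplicial fan $\Sigma$ refining the cone $\R_{\geq 0}^V$, together with a strictly convex piecewise linear function on $|\Sigma|$. This yields a projective toric morphism $f : X_\Sigma \to \A^V$ equipped with a relatively ample line bundle $\mathcal{L}$.

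The first main step is to identify the local face module geometrically for a specific choice of l.s.o.p. Following the toric decomposition theorem of de Cataldo--Migliorini--Mustata and results of Katz--Stapledon identifying local $h$-vectors with the multiplicity of $IC_{\{0\}}$ in the BBDG decomposition of $Rf_* IC_{X_\Sigma}[\dim X_\Sigma]$, there is a specific ``toric'' special l.s.o.p. $\theta^{\mathrm{tor}}$, with coefficients given by the lattice coordinates of the primitive ray generators, for which $L_{\theta^{\mathrm{tor}}}(\Gamma)$ is isomorphic as a graded $k[\Gamma]$-module to the stalk at the origin of the $IC_{\{0\}}$-summand, with the module structure coming from cup product with Chern classes of torus-invariant divisors. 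The second step applies relative Hard Lefschetz: since $\mathcal{L}$ is $f$-ample, cup product with $c_1(\mathcal{L})^{d-2s}$ gives an isomorphism of perverse cohomology sheaves ${}^p\mathcal{H}^{-(d-2s)}(Rf_* IC_{X_\Sigma}[\dim X_\Sigma]) \xrightarrow{\sim} {}^p\mathcal{H}^{d-2s}(Rf_* IC_{X_\Sigma}[\dim X_\Sigma])$. Projecting onto the $IC_{\{0\}}$-summand and taking stalks at the origin produces a class $u \in A_{\theta^{\mathrm{tor}}}(\Gamma)^1$ such that $u^{d-2s}$ induces an isomorphism $L^s_{\theta^{\mathrm{tor}}}(\Gamma) \xrightarrow{\sim} L^{d-s}_{\theta^{\mathrm{tor}}}(\Gamma)$ for every $s \le d/2$.

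The third step deduces the strong Lefschetz property for the generic special l.s.o.p. The locus of pairs $(\theta, u)$, where $\theta$ is a special l.s.o.p. and $u \in k[\Gamma]^1$, for which $u^{d-2s}$ induces an isomorphism $L^s_\theta(\Gamma) \xrightarrow{\sim} L^{d-s}_\theta(\Gamma)$ for every $s$, is Zariski open in the affine parameter space, being cut out by the non-vanishing of certain determinants. Step two exhibits a $k$-point in this locus, so it is non-empty; projecting to the parameter space of $\theta$ then shows that for the generic $\theta^{\mathrm{gen}}$ there exists some Lefschetz element $u \in K[\Gamma]^1$, which by the standard equivalence recorded via \cite[Lemma 5.1]{LNS} implies that $\ell = x_1 + \cdots + x_n$ itself is a Lefschetz element for $L(\Gamma)$, completing the proof. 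The hard part will be step one: although $L_\theta(\Gamma)$ is defined for any special l.s.o.p. $\theta$, its geometric identification with the $IC_{\{0\}}$-stalk only applies to the particular lattice-theoretic $\theta^{\mathrm{tor}}$, and carefully matching the $k[\Gamma]$-module structures and gradings on both sides is the most delicate ingredient; fortunately, this dictionary is essentially already implicit in the toric decomposition theorem underlying Theorem~\ref{thm:localhstructure}(3).
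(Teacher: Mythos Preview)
Your overall strategy matches the paper's: identify the local face module geometrically for a specific toric special l.s.o.p., apply relative Hard Lefschetz, then pass to the generic l.s.o.p.\ by a semicontinuity argument. Your step~3 and the paper's final paragraph are essentially identical.

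The substantive difference is how step~1 is executed, and here your proposal has a gap. You work in the affine picture $f\colon X_\Gamma \to \mathbb{A}^d$ and assert that the module-theoretic identification of $L_{\theta^{\mathrm{tor}}}(\Gamma)$ with the $IC_{\{0\}}$-summand is ``essentially already implicit'' in the sources behind Theorem~\ref{thm:localhstructure}(3). That is too optimistic: Stanley and de~Cataldo--Migliorini--Musta\c{t}\u{a} establish only the dimension count $\ell_s(\Gamma) = \ell(V, d-2s)$, not a $k[\Gamma]$-module isomorphism, and the paper explicitly frames this upgrade as something that ``extends Stanley's approach.'' In the affine setup there is no obvious ring that simultaneously carries the $k[\Gamma]$-action and the perverse filtration, so the ``most delicate ingredient'' you flag is not merely a bookkeeping matter but the actual content of the proof.

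The paper supplies it by compactifying to $\hat{f}\colon X_{\hat{\Gamma}} \to \mathbb{P}^d$, so that $H^*(X_{\hat{\Gamma}}; \mathbb{Q})$ is literally the Gorenstein ring $A_{\hat{\theta}}(\hat{\Gamma})$ for the toric l.s.o.p. Then Lemma~\ref{lem:annlocalface} identifies $L_\theta(\Gamma)$ with $\operatorname{ann}(x_c)/((x_c)\cap\operatorname{ann}(x_c))$, and since $x_c$ is the pullback of the hyperplane class on $\mathbb{P}^d$, the decomposition~\eqref{eq:hatfdecomp} exhibits the Jordan block decomposition of multiplication by $x_c$ on $A_{\hat{\theta}}(\hat{\Gamma})$; the subquotient $\operatorname{ann}(x_c)/((x_c)\cap\operatorname{ann}(x_c))$ then visibly coincides with the $H^*(X_{\hat{\Gamma}})$-module summand of the associated graded perverse filtration coming from the skyscrapers at the origin. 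Relative Hard Lefschetz for an ample class on $X_{\hat\Gamma}$ then applies directly. So the compactification together with Lemma~\ref{lem:annlocalface} is the mechanism that makes your step~1 rigorous.
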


Like Stanley's proof of the unimodality of local $h$-vectors for regular triangulations, our proof is based on the relative Hard Lefschetz theorem. We give a direct interpretation of the local face module in terms of the cohomology of a toric variety and then deduce the strong Lefschetz property from the relative Hard Lefschetz theorem. This extends Stanley's approach.

Another difficulty which arises when attempting to 
prove Lefschetz theorems for local face modules is that it is not clear what the minimal generators of the local face module are; it is not even obvious that $L(\Gamma)$ is generated as a $k[\Gamma]$-module in degree at most $d/2$. This makes it difficult to prove that a class in $L(\Gamma)$ vanishes using the nondegeneracy of the bilinear form. See Section~\ref{ssec:generation} for a discussion.

\begin{remark}
All of our results hold for an even more general class of triangulations, \emph{homology triangulations}, where the inverse images of simplices in $2^V$ are only required to be homology balls instead of being homeomorphic to a ball. See Remark~\ref{rem:homology1} and Remark~\ref{rem:homologylefschetz}.
\end{remark}

\emph{Relative local $h$-vectors} are invariants of triangulations $\sigma \colon \Gamma \to 2^V$ which also take into account a face $E$ of $\Gamma$. For quasi-geometric triangulations, there are generalizations of local face modules whose Hilbert functions are relative local $h$-vectors \cite{Athanasiadis12b,LPS1}.
Many of our results admit straightforward generalizations to relative local $h$-vectors, see Section~\ref{ssec:relative}.

\subsection*{Acknowledgements}
We thank Kalle Karu for explaining Stanley's nondegenerate bilinear form on local face modules to us.  We thank Christos Athanasiadis, Satoshi Murai, Isabella Novik, Ryoshun Oba, and Sam Payne for useful conversations. We thank the referee for their careful reading and helpful comments.  
This work was mostly conducted while the authors were at the Institute for Advanced Study, where the second author received support from the Charles Simonyi endowment.

\section{Triangulations and local face modules}

\subsection{Triangulations of simplices}\label{sec:triang}

We now recall various notions of triangulations of simplices. See \cite[Section 2]{AthanasiadisSurvey} for a detailed summary. 
Recall from the introduction that a \emph{topological triangulation} of a simplex is an order-preserving  map $\sigma \colon \Gamma \to 2^V$ of posets from an abstract simplicial complex $\Gamma$ to a simplex $2^V$, 
such that, for every subset $U \subset V$, 
the geometric realization of the
subcomplex $\Gamma_U := \sigma^{-1}(2^U)$ of $\Gamma$ is homeomorphic to a $(|U| - 1)$-dimensional ball, and $\sigma^{-1}(U)$ is the set of interior faces of the ball $\Gamma_U$.

\begin{remark}\label{rem:GleqsigmaG}
	Let $G$ be a face of $\Gamma$. Then 
$G$ is an interior face of $\Gamma_{\sigma(G)}$ which is a ball of dimension $|\sigma(G)| - 1$, and hence  $|G| \le |\sigma(G)|$. 
\end{remark}

\begin{remark}\label{rem:homology1}
A \emph{homology triangulation} over a field $k$ 
is an order-preserving  map $\sigma \colon \Gamma \to 2^V$ of posets from an abstract simplicial complex $\Gamma$ to a simplex $2^V$, 
such that, for every subset $U \subset V$, 
the
subcomplex $\Gamma_U$ of $\Gamma$ is a $(|U| - 1)$-dimensional homology ball over $k$, and $\sigma^{-1}(U)$ is the set of interior faces of the homology ball $\Gamma_U$.
The definition and properties of the local face module of quasi-geometric triangulations that are discussed in this section generalize to quasi-geometric homology triangulations over $k$. 
\end{remark}

We say that a topological triangulation $\sigma \colon \Gamma \to 2^V$ is 
\begin{enumerate}
\item\label{i:qg} \emph{quasi-geometric} if there is no face $G$ of $\Gamma$ and proper face $F \subset \sigma(G)$
such that $|F| < |G|$ and $\sigma(\{ j\}) \subset F$ for all vertices $j \in G$. 

\item\label{i:vi} \emph{vertex-induced}  
if there is no face $G$ of $\Gamma$ and proper face $F \subset \sigma(G)$ such that $\sigma(\{ j\}) \subset F$ for all vertices $j \in G$.

\item\label{i:geometric} \emph{geometric} if there is a triangulation of the geometric realization of $2^V$ into a (convex) geometric realization of $\Gamma$. 

\item\label{i:regular} \emph{regular} if it is geometric, and the triangulation can be chosen to be the projection of the lower faces of the convex hull of a polyhedron. 
\end{enumerate}
The above conditions are listed in increasing order of strength, i.e., \eqref{i:regular} $\implies$ \eqref{i:geometric} $\implies$ \eqref{i:vi} $\implies$ \eqref{i:qg}.

Consider a topological triangulation $\sigma \colon \Gamma \to 2^V$ of a $(d - 1)$-dimensional simplex. Let $h(\Gamma) = (h_0,\ldots,h_d)$ be the $h$-vector of $\Gamma$ with corresponding $h$-polynomial $h(\Gamma;t) = \sum_{s = 0}^d h_s t^s$. 
The local $h$-vector $\ell(\Gamma) = (\ell_0,\ldots,\ell_d)$ of $\sigma$ and corresponding local $h$-polynomial $\ell(\Gamma;t) = \sum_{s = 0}^d \ell_s t^s$
are defined by 
\begin{equation}\label{eq:localh}
\ell(\Gamma;t) := \sum_{U \subset V} (-1)^{|V| - |U|} h(\Gamma_U;t). 
\end{equation}
For example, when $\emptyset = U \subset V$, then $h(\Gamma_U;t) = \ell(\Gamma_U;t) = 1$. Note that  $\ell(\Gamma)$ depends on $\sigma$ and not just on $\Gamma$ (see \cite[Example~2.3e]{Stanleylocalh}).

Let $k$ be a field, and let $\{ 1, \ldots, n \}$ be the vertex set of $\Gamma$. Consider the polynomial ring $k[x_1,\ldots,x_n]$, and let $x^F = \prod_{j \in F} x_j$ for any subset $F$ of $\{ 1, \ldots, n \}$. 
Recall that the face ring $k[\Gamma] = \oplus_s k[\Gamma]^s$ 
is the quotient of $k[x_1,\ldots,x_n]$ by the ideal $(x^F : F \subset \{ 1, \ldots, n \}, \, F \notin \Gamma )$. 
We say that elements $ \theta = \{ \theta_1,\ldots,\theta_d \}$ with $\theta_i \in k[\Gamma]^1$ are a linear system of parameters (l.s.o.p.) if $A_\theta(\Gamma) := k[\Gamma]/(\theta_1, \dotsc, \theta_d)$ is finite-dimensional as a $k$-vector space. In this case, $h(\Gamma)$ agrees with the Hilbert series of $A_\theta(\Gamma)$ because 
$k[\Gamma]$
is Cohen--Macaulay, see, e.g., \cite[pg. 61]{Stanleybook}. 
We say that an element $f = \sum_{j = 1}^n a_j x_j$ in $k[\Gamma]^1$ is supported on the vertices $\{ j : a_j \neq 0 \}$ of $\Gamma$, and we write $f|_F := \sum_{j \in F} a_j x_j$ for a face $F$ of $\Gamma$. We will use the following criterion to determine if $\theta$ is an l.s.o.p. 

\begin{lemma}\label{lem:Stanleycriterion}\cite[Lemma III.2.4]{Stanleybook}
	Consider elements $ \theta = \{ \theta_1,\ldots,\theta_d \}$ with $\theta_i \in k[\Gamma]^1$. Then $\theta$ is a l.s.o.p. if and only if, for every face $F$ of $\Gamma$, the restrictions $\{ \theta_1|_F,\ldots,\theta_d|_F \}$ span a $k$-vector space of dimension $|F|$. 
\end{lemma}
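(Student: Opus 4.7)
The plan is to translate the l.s.o.p. condition into a statement about $\Spec k[\Gamma]$ and then reduce it to a condition on minimal primes, which correspond to the facets of $\Gamma$.

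First, I would recall the standard fact that the minimal primes of $k[\Gamma]$ are exactly $P_F := (x_j : j \notin F)$ as $F$ ranges over the facets of $\Gamma$, and that $k[\Gamma]/P_F \cong k[x_j : j \in F]$ is a polynomial ring in $|F|$ variables. This is the primary decomposition of the squarefree monomial ideal cutting out $k[\Gamma]$. In particular, $\Spec k[\Gamma]$ is set-theoretically the union of the coordinate subspaces $\Spec k[\Gamma]/P_F \cong \mathbb{A}^{|F|}$ over facets $F$. By definition, $\theta$ is an l.s.o.p. if and only if $A_{\theta}(\Gamma) = k[\Gamma]/(\theta)$ is a finite-dimensional $k$-vector space, equivalently, the closed subscheme of $\Spec k[\Gamma]$ cut out by $\theta_1, \ldots, \theta_d$ is supported at the unique graded maximal ideal.

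Second, I would combine these two observations. Since the vanishing set of $\theta_1, \ldots, \theta_d$ inside $\Spec k[\Gamma]$ is supported at the origin if and only if the same holds inside each irreducible component $\Spec k[\Gamma]/P_F$, the condition becomes: for every facet $F$, the linear forms $\theta_1|_F, \ldots, \theta_d|_F$ have no common nonzero zero on $\mathbb{A}^{|F|}$. For linear forms in a polynomial ring, this is equivalent to their span being the full $|F|$-dimensional space of linear forms in $\{x_j : j \in F\}$, as follows from Noether normalization or from the observation that a proper ideal in $k[x_j : j \in F]$ generated by linear forms of rank less than $|F|$ is contained in a non-maximal homogeneous prime.

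Third, I would verify that the stated criterion, which demands the spanning property on every face, is equivalent to the corresponding statement on facets alone. One direction is immediate. For the converse, let $F \subsetneq F'$ with $F'$ a facet. If $\{\theta_i|_{F'}\}$ spans the $|F'|$-dimensional space of linear forms in $\{x_j : j \in F'\}$, then composing with the coordinate projection $k[x_j : j \in F'] \twoheadrightarrow k[x_j : j \in F]$ that sends $x_j \mapsto 0$ for $j \in F' \setminus F$ carries this spanning set to a spanning set for the $|F|$-dimensional space of linear forms in $\{x_j : j \in F\}$, yielding the condition for $F$.

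I do not expect a significant obstacle: the only mildly technical point is the identification of the minimal primes of $k[\Gamma]$ with the facets of $\Gamma$, which is standard. Everything else is a routine application of dimension theory, and the combinatorial reduction from arbitrary faces to facets is straightforward linear algebra.
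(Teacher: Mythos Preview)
The paper does not prove this lemma; it simply cites \cite[Lemma III.2.4]{Stanleybook} and uses the result as a black box. Your argument is correct and is essentially the standard proof: identify the minimal primes of $k[\Gamma]$ with facets, observe that $k[\Gamma]/(\theta)$ is artinian if and only if the image of $(\theta)$ in each $k[\Gamma]/P_F \cong k[x_j : j \in F]$ has the irrelevant ideal as its radical, and note that for linear forms this is exactly the spanning condition. The reduction from arbitrary faces to facets via the coordinate projection is also correct and standard.
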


Recall  that $\theta$ is special if, for each $1 \le i \le d$, $\theta_i$ is supported on vertices $j$ of $\Gamma$ with $i \in \sigma(j)$. Assume that $\theta$ is a special l.s.o.p. Recall from Definition~\ref{def:localfacemodule} that $L_\theta(\Gamma)$ is the image of $(\operatorname{int} \Gamma) = (x^G : \sigma(G) = V)$ in
$A_{\theta}(\Gamma)$. 
When the triangulation $\sigma$ is quasi-geometric, $\ell(\Gamma)$ agrees with the Hilbert function of $L_{\theta}(\Gamma)$ for any special l.s.o.p. $\theta = \{ \theta_1, \dotsc, \theta_d \}$ \cite[Theorem 4.6]{Stanleylocalh}.

The \emph{excess}  of a face $F$ of $\Gamma$ is $e(F) := |\sigma(F)| - |F|$. 
The following formula was proved in \cite[Proposition~2.2]{Stanleylocalh} and is useful for computing examples:
\begin{equation}\label{eq:excessformula}
	\ell(\Gamma;t) = \sum_{F \in \Gamma} (-1)^{d - |\sigma(F)|} t^{d - e(F)} (1 - t)^{e(F)}. 
\end{equation}

\begin{example}\label{ex:semismall}
	A topological triangulation $\sigma \colon \Gamma \to 2^V$ is \emph{semi-small} if
	$2e(F) \le |\sigma(F)|$ for all $F$ in $\Gamma$. In this case, by considering terms of lowest degree in \eqref{eq:excessformula} and using the symmetry of the local $h$-vector from Theorem~\ref{thm:localhstructure}, we conclude that 
	$\ell(\Gamma;t) = |\{ F \in \Gamma : |F| = d/2, \, \sigma(F) = V\}| \cdot t^{d/2}$.
	
	For example,  $\ell(\Gamma; t) = 0$ if $d$ is odd.
	Moreover, if $\Gamma$ is quasi-geometric, then the image of $\{ x^F : |F| = d/2, \, \sigma(F) = V \}$ is a $k$-basis for $L(\Gamma)$, for example by Lemma~\ref{lem:squarefree}. 
\end{example}

\subsection{Bilinear forms on local face modules}\label{ssec:bilinear}

Let $\sigma \colon \Gamma \to 2^V$ be a quasi-geometric triangulation of a simplex of dimension $d-1$, and let $\theta = \{ \theta_1, \dotsc, \theta_d \}$ be a special l.s.o.p. for $k[\Gamma]$.
We will make use of a symmetric 
bilinear form $B \colon L_{\theta}(\Gamma) \times L_{\theta}(\Gamma) \to k$. This bilinear form is $k[\Gamma]$-invariant, in the sense that for $x \in k[\Gamma]$ and $u, v \in L_{\theta}(\Gamma)$, we have $B(x\cdot u, v) = B(u, x \cdot v)$. The bilinear form was constructed in \cite[Corollary 4.19]{Stanleylocalh} using an identification of the canonical module of $k[\Gamma]$ with $(\operatorname{int} \Gamma)$. 
We give a different description of it. 

Let $\hat{\Gamma}$ be the triangulation of a $(d-1)$-dimensional sphere obtained from $\Gamma$ by adding a vertex $c$, and for each face $G$ of $\Gamma$ which is  contained in the boundary of $\Gamma$, i.e., with $\sigma(G) \neq V$, adding the face $G \cup \{c\}$. Set $\hat{\theta}_i = \theta_i - x_c$. 
It follows from Lemma~\ref{lem:Stanleycriterion} 
that $\hat{\theta}_1, \dotsc, \hat{\theta}_d$ is an l.s.o.p. for $k[\hat{\Gamma}]$. 
Let $A_{\hat{\theta}}(\hat{\Gamma}) = 
k[\hat{\Gamma}]/(\hat{\theta}_1, \dotsc, \hat{\theta}_d)$. 
It is known that $A_{\hat{\theta}}(\hat{\Gamma})$ is an artinian Gorenstein algebra \cite[pg. 65]{Stanleybook}: there is an isomorphism $\deg \colon A_{\hat{\theta}}^{d}(\hat{\Gamma}) \to k$ such that the pairing
$A_{\hat{\theta}}^{s}(\hat{\Gamma}) \times  A_{\hat{\theta}}^{d - s}(\hat{\Gamma}) \to k$ defined by
$(x, y) \mapsto \deg(x \cdot y)$ is nondegenerate. We choose an orientation of $\hat{\Gamma}$ and use the standard normalization of the degree map \cite{BrionStructurePolytopeAlgebra}, see \cite[Section 2.5]{KaruXiao}.
For $y \in A_{\hat{\theta}}(\hat{\Gamma})$, let $\operatorname{ann}(y)$ denote the annihilator of $y$ in $A_{\hat{\theta}}(\hat{\Gamma})$. Note that if $G$ is an interior face,  then $G \cup \{c\}$ is not a face of $\hat{\Gamma}$, so $x^G \in \operatorname{ann}(x_c)$.

\begin{lemma}\label{lem:annlocalface}
There is a graded isomorphism of $k[\Gamma]$-modules between $\operatorname{ann}(x_c)/((x_c) \cap \operatorname{ann}(x_c))$ and $L_{\theta}(\Gamma)$, given by sending the class of $x^G$ to the class of $x^G$ for every interior face $G$. 
\end{lemma}

\begin{proof}
As $A_{\hat{\theta}}(\hat{\Gamma})/(x_c)$ is naturally identified with $A_{\theta}(\Gamma) = k[\Gamma]/(\theta_1, \dotsc, \theta_d)$, it suffices to show that $\operatorname{ann}(x_c) = (\operatorname{int} \Gamma) \subset A_{\hat{\theta}}(\hat{\Gamma})$.
One inclusion is easy: 
as observed above, if $G$ is interior, then $x^G \in \operatorname{ann}(x_c)$.
We therefore have a surjective map 
$$A_{\hat{\theta}}(\hat{\Gamma})/(\operatorname{int} \Gamma) \to A_{\hat{\theta}}(\hat{\Gamma})/\operatorname{ann}(x_c).$$
Note that $A_{\hat{\theta}}(\hat{\Gamma})/\operatorname{ann}(x_c)$ is an artinian Gorenstein algebra with socle in degree $d-1$. 
Indeed, the pairing 
between graded pieces of degrees $s$ and $d - 1 - s$ defined by
$(x, y) \mapsto \deg(x \cdot y \cdot x_c)$ is nondegenerate. 
Also, 
$A_{\hat{\theta}}(\hat{\Gamma})/(\operatorname{int} \Gamma)$
is the quotient of the face ring of the cone over the boundary of $\Gamma$ by an l.s.o.p. 
Using one of the elements of the l.s.o.p., we can write the variable $x^c$ corresponding to the cone vertex in terms of the other vertices. 
This identifies $A_{\hat{\theta}}(\hat{\Gamma})/(\operatorname{int} \Gamma)$ with the quotient of $k[\operatorname{lk}_{c}(\hat{\Gamma})]$, the face ring of the link of $c$ in $\hat{\Gamma}$, by an l.s.o.p. 
Because ${\Gamma}$ is a ball of dimension $d-1$, $\operatorname{lk}_c(\hat{\Gamma})$ is a sphere of dimension $d-2$, and so this quotient of $k[\operatorname{lk}_{c}(\hat{\Gamma})]$ is an artinian Gorenstein algebra with socle in degree $d-1$. The result follows, as a surjective map between artinian Gorenstein algebras whose socles are in the same degree is an isomorphism. 
\end{proof}

\begin{lemma}
The restriction of the Poincar\'{e} pairing to $\operatorname{ann}(x_c)$ induces a nondegenerate bilinear form $B \colon L_{\theta}(\Gamma) \times L_{\theta}(\Gamma) \to k$.
\end{lemma}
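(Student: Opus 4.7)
The plan is to combine the identification $L_{\theta}(\Gamma) \cong \operatorname{ann}(x_c)/((x_c) \cap \operatorname{ann}(x_c))$ from Lemma~\ref{lem:annlocalface} with Gorenstein duality for $A := A_{\hat{\theta}}(\hat{\Gamma})$. Two things need to be verified: first, that the Poincar\'e pairing on $A$ restricted to $\operatorname{ann}(x_c)$ descends to a well-defined bilinear form on the quotient by $(x_c) \cap \operatorname{ann}(x_c)$; second, that the descended form is nondegenerate. Symmetry is automatic from commutativity of multiplication in $A$.

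For the descent, suppose $u \in \operatorname{ann}(x_c)$ and $v \in (x_c) \cap \operatorname{ann}(x_c)$, written as $v = x_c w$ for some $w \in A$. Then $\deg(u v) = \deg((u x_c) w) = 0$, since $u x_c = 0$. Hence the restriction of the Poincar\'e pairing to $\operatorname{ann}(x_c) \times \operatorname{ann}(x_c)$ vanishes when either entry lies in $(x_c) \cap \operatorname{ann}(x_c)$, so it descends to a well-defined symmetric bilinear form $B \colon L_{\theta}^{s}(\Gamma) \times L_{\theta}^{d-s}(\Gamma) \to k$ for each $s$, and by summing over $s$, a bilinear form $B \colon L_{\theta}(\Gamma) \times L_{\theta}(\Gamma) \to k$.

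For nondegeneracy, the key input is the standard duality statement for artinian Gorenstein algebras: for any homogeneous element $y \in A$, we have $\operatorname{ann}(y) = (y)^{\perp}$ under the Poincar\'e pairing, and hence by biduality $(y) = \operatorname{ann}(y)^{\perp}$. Applied to $y = x_c \in A^1$, this gives $(x_c) = \operatorname{ann}(x_c)^{\perp}$. Now suppose $u \in \operatorname{ann}(x_c)^{s}$ pairs to zero with every element of $\operatorname{ann}(x_c)^{d-s}$. Then $u \in \operatorname{ann}(x_c)^{\perp} = (x_c)$, so $u \in (x_c) \cap \operatorname{ann}(x_c)$ represents the zero class in $L_{\theta}(\Gamma)$, proving nondegeneracy.

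The only real step to unpack is the duality $(y) = \operatorname{ann}(y)^{\perp}$: for $u \in A^s$, we have $u \in \operatorname{ann}(y)$ iff $y u = 0$ iff $\deg(y u w) = 0$ for every $w \in A^{d - s - \deg y}$ (by nondegeneracy of the Poincar\'e pairing in complementary degrees) iff $u$ is orthogonal to $(y)^{d - s} = y \cdot A^{d - s - \deg y}$, which gives $\operatorname{ann}(y) = (y)^{\perp}$; the other equality follows from the nondegeneracy of the Poincar\'e pairing. This is entirely standard, so no real obstacle is expected; the argument is purely a bookkeeping application of Gorenstein duality to the element $x_c$.
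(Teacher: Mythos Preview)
Your proof is correct and follows essentially the same approach as the paper: the descent step is identical, and your nondegeneracy argument via $\operatorname{ann}(x_c)^\perp = (x_c)$ is exactly the paper's one-line invocation of $\operatorname{ann}(\operatorname{ann}(x_c)) = (x_c)$ (these coincide since $I^\perp = \operatorname{ann}(I)$ for ideals in an artinian Gorenstein algebra), just unpacked in more detail.
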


\begin{proof}
For any $y \in \operatorname{ann}(x_c)$ and $z \in (x_c)$, we have $\deg(y \cdot z) = 0$, so the pairing descends to $L_{\theta}(\Gamma)$ by Lemma~\ref{lem:annlocalface}. The pairing is nondegenerate because $\operatorname{ann}(\operatorname{ann}(x_c)) = (x_c)$ as $A_{\hat{\theta}}(\hat{\Gamma})$ is an artinian Gorenstein algebra. 
\end{proof}

To summarize, given $u, v \in L_{\theta}(\Gamma)$, we can compute $B(u, v)$ by lifting $u$ and $v$ to $\operatorname{ann}(x_c) \subset A_{\hat{\theta}}(\hat{\Gamma})$, multiplying them, and then computing the degree in $A_{\hat{\theta}}(\hat{\Gamma})$.

\begin{remark}\label{rem:Bindependentfield}
	 In fact, if $u, v \in L_{\theta}(\Gamma)$ are the images of a $\Z$-linear combination of monomials, then $B(u,v)$ is the image of an element of $\Z(a_{i,j})_{1 \le j \le n, \, i \in \sigma(j)}$ in $K$. 	This follows from the construction of the degree map. See, for example, \cite{LNS}*{(1),(4)}. 
\end{remark}

\subsection{Generating the local face module}

We will need to prove several results giving generators for local face modules. See also Corollary~\ref{cor:dminus1gen}. Let $\sigma \colon \Gamma \to 2^V$ be a quasi-geometric triangulation of a simplex of dimension $d-1$, and let $\theta = \{ \theta_1, \dotsc, \theta_d \}$ be a special l.s.o.p. for $K[\Gamma]$.
The \emph{underlying face} of a nonzero monomial $x_1^{a_1} \dotsb x_n^{a_n}$ in $K[\Gamma]$ is $\{j : a_j \not= 0\}$. Because the monomial is assumed to be nonzero, this is indeed a face of $\Gamma$.

\begin{lemma}\label{lem:squarefree}
	The classes $\{x^G : G \text{ interior face}\}$ span $L_\theta(\Gamma)$. 
\end{lemma}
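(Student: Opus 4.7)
The plan is to reduce the lemma to the stronger statement that every monomial $m \in k[\Gamma]$ whose underlying face is an interior face of $\Gamma$ is, modulo $(\theta_1, \dotsc, \theta_d)$, a $k$-linear combination of squarefree monomials $x^G$ with $G$ interior. This implies the lemma because $(\operatorname{int}\Gamma)$ has a $k$-basis consisting of the monomials of $k[\Gamma]$ whose underlying face is interior: if $H = \operatorname{supp}(m)$ is interior then $m = x^H \cdot (m/x^H)$ lies in $(\operatorname{int}\Gamma)$, and conversely any monomial in $(\operatorname{int}\Gamma)$ has support containing an interior face, which is itself interior by upward closure.

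I would prove the stronger statement by induction on the \emph{excess} $e(m) := \sum_j a_j - |H|$ of a monomial $m = \prod_j x_j^{a_j}$ with $H = \operatorname{supp}(m)$; note $e(m) = 0$ precisely when $m = x^H$ is squarefree, giving the base case. For the inductive step, assume $e(m) \geq 1$ and pick $l \in H$ with $a_l \geq 2$.

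The key input is Lemma~\ref{lem:Stanleycriterion} applied to $H$: the restrictions $\{\theta_1|_H, \dotsc, \theta_d|_H\}$ span a $k$-vector space of dimension $|H|$, which must then coincide with the $|H|$-dimensional space $\operatorname{span}_k\{x_{l'} : l' \in H\}$ in which it sits. Hence some linear combination $\tilde{\theta}_l = \sum_i b_i \theta_i$ satisfies $\tilde{\theta}_l|_H = x_l$; equivalently, $\tilde{\theta}_l = x_l + \sum_{l' \notin H} c_{l'} x_{l'}$. Since $\tilde{\theta}_l \cdot (m/x_l) \in (\theta_1, \dotsc, \theta_d)$, expanding yields
\[
m \equiv -\sum_{l' \notin H} c_{l'} \, x_{l'} \cdot (m/x_l) \pmod{(\theta_1, \dotsc, \theta_d)}.
\]
Terms with $H \cup \{l'\} \notin \Gamma$ vanish in $k[\Gamma]$; each remaining monomial has underlying face $H \cup \{l'\}$, still interior because $H$ is, with excess $e(m) - 1$. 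The inductive hypothesis finishes the argument.

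The main obstacle will be choosing the correct induction parameter: naive measures like the maximum exponent or the number of repeated factors can fail to strictly decrease if the replacement of $x_l$ introduces a new copy of a vertex already in $H$. Working with the excess circumvents this because the defining property of $\tilde{\theta}_l$ forces $x_l$ to be replaced by vertices strictly outside $H$, enlarging the underlying face and hence lowering the excess by one.
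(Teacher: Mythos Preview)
Your proof is correct and follows essentially the same approach as the paper: both use Lemma~\ref{lem:Stanleycriterion} to find a linear combination of the $\theta_i$ whose restriction to the underlying face $H$ equals $x_l$, and use this relation to rewrite the monomial as a combination of monomials whose underlying face is strictly larger (hence still interior) while the degree is unchanged. Your induction on the excess $e(m)$ makes explicit the termination measure that the paper leaves implicit with the phrase ``fewer squares.''
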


\begin{proof}
	By definition, $L_\theta(\Gamma)$ is spanned by monomials in $(\operatorname{int} \Gamma)$. Given such a monomial which is not squarefree, we can write it as $x_j \cdot x^J$, where the underlying face of $x^J$ is an interior face $F$ and $j \in F$. 
	By Lemma~\ref{lem:Stanleycriterion}, 
	we can take a $k$-linear combination of the $\theta_i$ so that the coefficient of $x_j$ is $1$ and the coefficient of $x_{j'}$ is $0$ for all $j' \in F \setminus \{j\}$. Using this, we can rewrite $x_j \cdot x^J$ in terms of monomials that have larger support. Because $F$ is interior, the underlying faces of these monomials will still be interior. 
\end{proof}

\begin{lemma}\label{lem:generation}
	If $\Gamma$ is vertex-induced, then $L_\theta(\Gamma)$ is spanned by monomials whose underlying face is interior and does not contain any vertices of excess $0$. 
\end{lemma}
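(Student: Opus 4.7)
My plan is to argue by induction on $k = |F \cap Z|$, where $Z$ is the set of excess-$0$ vertices of $\Gamma$ and $F$ is the underlying face of a monomial. The key structural input is that a topological triangulation has a \emph{unique} excess-$0$ vertex for each $i \in V$: since $\Gamma_{\{i\}}$ is a $0$-dimensional ball (a single point), there is exactly one vertex $v_i$ with $\sigma(\{v_i\}) = \{i\}$, and $Z = \{v_1, \dotsc, v_d\}$. By Lemma~\ref{lem:Stanleycriterion} applied to the face $\{v_i\}$, the coefficient $a_{i, v_i}$ of $x_{v_i}$ in $\theta_i$ is nonzero. Writing
\[
\theta_i = a_{i, v_i} x_{v_i} + \sum_{j' \neq v_i,\ i \in \sigma(\{j'\})} a_{i, j'} x_{j'},
\]
every $j'$ in the sum satisfies $|\sigma(\{j'\})| \geq 2$ and hence is \emph{not} excess-$0$.

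I would then prove the following by strong induction on $k$: every monomial $m$ with interior underlying face $F$ and $|F \cap Z| = k$ represents a class in $A_\theta(\Gamma)$ that is a $k$-linear combination of classes of monomials with interior underlying face disjoint from $Z$. Since $L_\theta(\Gamma)$ is spanned by classes of monomials with interior underlying face, the lemma follows. For $k \geq 1$, choose $v_i \in F \cap Z$ and factor $m = x_{v_i}^a m'$ with $v_i \notin \supp(m')$. The relation $\theta_i \cdot x_{v_i}^{a-1} m' = 0$ in $A_\theta(\Gamma)$ rewrites $x_{v_i}^a m'$ as a linear combination of monomials $x_{j'} x_{v_i}^{a-1} m'$ with $j'$ non-excess-$0$; iterating the same relation $a$ times clears every factor of $x_{v_i}$, expressing $m$ as a linear combination of monomials of the form $x_{j_1'} \cdots x_{j_a'} m'$ where all $j_k'$ are non-excess-$0$ and each satisfies $i \in \sigma(\{j_k'\})$. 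Terms whose support fails to be a face of $\Gamma$ vanish in $k[\Gamma]$; surviving monomials have underlying face $\{j_1', \dotsc, j_a'\} \cup (F \setminus \{v_i\})$. Vertex-inducedness gives $\sigma(F) = \sigma(F \setminus \{v_i\}) \cup \sigma(\{v_i\}) = V$, whence $\sigma(F \setminus \{v_i\}) \supseteq V \setminus \{i\}$, and combined with $i \in \sigma(\{j_k'\})$ the new underlying face has $\sigma$-image $V$, so is interior. Its excess-$0$ count is at most $k - 1$, completing the induction.

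The main obstacle is that when $a \geq 2$, a single $\theta_i$-rewrite leaves behind terms $x_{j'} x_{v_i}^{a-1} m'$ still containing $v_i$, so it does not directly reduce the excess-$0$ count. Naively invoking Lemma~\ref{lem:squarefree} to first squarefreeify these intermediates would be dangerous, because the linear combination of the $\theta_l$ needed there may have a nonzero coefficient on some $\theta_l$ whose unique excess-$0$ preimage $v_l$ lies outside $F$, reintroducing excess-$0$ vertices. The resolution is the uniqueness of $v_i$ as the excess-$0$ preimage of $\{i\}$: iterating only the relation $\theta_i$ uses solely non-excess-$0$ substitutions at every step, so all factors of $x_{v_i}$ can be cleared cleanly before any squarefreeness issue arises.
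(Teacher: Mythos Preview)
Your proof is correct and follows essentially the same approach as the paper's: both arguments induct on the number of excess-$0$ vertices in the underlying face, use the uniqueness of the excess-$0$ vertex $v_i$ over each $i\in V$ together with Lemma~\ref{lem:Stanleycriterion} to rewrite $x_{v_i}$ via the relation $\theta_i$ in terms of non-excess-$0$ variables, and then verify via vertex-inducedness that the resulting monomials still have interior underlying face. The only cosmetic difference is that the paper phrases the elimination of all powers of $x_{v_i}$ as a single substitution-and-expansion, whereas you phrase it as iterating the $\theta_i$-relation $a$ times; these are the same operation.
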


\begin{proof}
	Let $x^J = x_{j_1}^{a_1} \dotsb x_{j_t}^{a_t}$ be a monomial whose underlying face is $\{j_1, \dotsc, j_t\}$, and assume that $\{j_1, \dotsc, j_t\}$ is interior. Because $\Gamma$ is vertex-induced, $V = \sigma(j_1) \cup \dotsb \cup \sigma(j_t)$. Suppose that $j_1$ has excess $0$, with $\sigma(j_1) = \{i_1\}$. Then $j_1$ is the unique vertex of excess $0$ such that $i_1 \in \sigma(j_1)$. 
	We may write $\theta_{i_1} = a_{i_1,j_1} x_{j_1} + \sum_{j} a_{i_1,j'} x_{j'}$, where $a_{i,j} \in k$ and  $j'$ varies over all vertices of $\Gamma$ such that $i_1 \in \sigma(j')$ and $j'$ does not have excess $0$. 
	Moreover, for any $i \neq i_1$, $i \notin \sigma(j_1)$ implies that $\theta_i|_{\{j_1\}} = 0$, and Lemma~\ref{lem:Stanleycriterion} implies that $\theta_{i_1}|_{\{j_1\}} = a_{i_1,j_1} x_{j_1} \neq 0$.
	Substituting the relation 
	$x_{j_1} = -\sum_{j'} a_{i_1,j_1}^{-1}a_{i_1,j'} x_{j'} \in
	 A_{\theta}(\Gamma)$
	into $x^J$ and expanding, we express $x^J$ as a linear combination of monomials 
	with a strictly smaller number of vertices of excess $0$ in their underlying faces than $x^J$. Moreover, 
	the underlying faces of these monomials are all interior. Indeed, this follows since $\Gamma$ is vertex-induced and
	the image of the underlying faces under $\sigma$ contains both $i_1$ and
	$V \smallsetminus \{ i_1 \} \subset \sigma(j_2) \cup \dotsb \cup \sigma(j_t)$. 
	Repeating this gives the result.
\end{proof}

\begin{remark}
	If $\Gamma$ is not vertex-induced, then the conclusion of Lemma~\ref{lem:generation} can fail, see Example~\ref{ex:nonvertexinduced}.
\end{remark}

\section{Characteristic 2 results}

Throughout this section, we consider a quasi-geometric triangulation $\sigma \colon \Gamma \to 2^V$. 
We fix $k$ to be a field of characteristic $2$ and set $K = k(a_{i,j})_{1 \le j \le n, \, i \in \sigma(j)}$. We use a generic special l.s.o.p., i.e., $\theta_i = \sum_{\sigma(j) \ni i} a_{i,j} x_j$. 

\subsection{Differential operators}

If $L$ is an $m \times n$ matrix of nonnegative integers for some positive integer $m$, then define 
\[ x^L \coloneqq \prod_{i,j} x_j^{L_{i,j}} = \prod_j x_j^{\sum_i L_{i,j}}.\] 

For each $(i, j)$ with $i \in \sigma(j)$, 
the differential operator $\frac{\partial}{\partial a_{i,j}}$ acts on $K$. 
Let $I$ be a $d \times n$ matrix of nonnegative integers. We say that $I$ is \emph{valid} if $I_{i,j} = 0$ whenever $i \not \in \sigma(j)$ and 
all row sums are $1$, i.e.,
each row of $I$ contains a unique $1$ and all other entries are $0$. If $I$ is valid, then we set
$$\partial^I \coloneqq \prod_{i,j} \left(\frac{\partial}{\partial a_{i,j}}\right)^{I_{i,j}}.$$
Here we are thinking of $x^I$ as an element of the polynomial ring $K[x_1, \dotsc, x_n]$, but we will also consider it as an element of $K[\Gamma]$ or $K[\hat{\Gamma}]$. 

\begin{remark}\label{rem:validunderlyingsupport}
	Suppose that $I$ is valid and $x^I \in K[\Gamma]$ is nonzero. Let $G$  be the underlying face of $x^I$ in $\Gamma$. Then for every $1 \le i \le d$, there exists a vertex $j$ such that $I_{i,j} = 1$, and so $i \in \sigma(j) \subset \sigma(G)$. We deduce that $G$ is interior. 
\end{remark}

We will make use of the following identity, which was conjectured by Papadakis and Petrotou \cite{PapadakisPetrotoug} and proved by Karu and Xiao \cite{KaruXiao}. See \cite[Corollary~3.1]{KLS} for a generalization to characteristic $p$. 
Given a monomial $m = x_1^{a_1} \dotsb x_n^{a_n}$, we 
set $\sqrt{m} = x_1^{a_1/2} \dotsb x_n^{a_n/2}$ if each $a_j$ is divisible by $2$, and we set $\sqrt{m} = 0$ otherwise.

\begin{proposition}\cite[Corollary 4.2]{KaruXiao}\label{prop:KXidentity}
Let $h \in K[\hat{\Gamma}]^{s}$ for some $s \le d/2$, 
let $I$ be a valid $d \times n$ matrix of nonnegative integers, and   let $J$ be a $1 \times n$  matrix of nonnegative integers  with row sum $d - 2s$.
Then
$$\partial^I \deg(h^2 \cdot x^J) =  (\deg (h\cdot \sqrt{x^I \cdot x^J}))^2.$$
\end{proposition}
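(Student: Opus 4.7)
The plan is to induct on the degree $s$ of $h$, with the characteristic-$2$ hypothesis making the inductive step nearly trivial and concentrating essentially all the content in the base case.

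First, observe that both sides of the identity are additive in $h$ in characteristic $2$: on the left, $(h_1 + h_2)^2 = h_1^2 + h_2^2$ since the cross term carries a factor of $2$; on the right, $(\alpha + \beta)^2 = \alpha^2 + \beta^2$ for the same reason. So we may assume $h = x^H$ is a single monomial in $K[\hat{\Gamma}]^s$, and if the support of $H$ is not a face of $\hat\Gamma$ then $h = 0$ and the identity is trivial.

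For the inductive step, write $h = x_k \cdot h'$ with $h' \in K[\hat{\Gamma}]^{s-1}$ a monomial. Then
$$h^2 \cdot x^J = h'^2 \cdot x^{J + 2 e_k} \quad \text{and} \quad \sqrt{x^I \cdot x^{J + 2 e_k}} = x_k \cdot \sqrt{x^I \cdot x^J},$$
where $e_k$ denotes the $k$-th standard basis row vector. The row sum of $J + 2 e_k$ is $d - 2(s-1)$, so by the inductive hypothesis applied to $(h', J + 2 e_k)$,
$$\partial^I \deg(h^2 \cdot x^J) = \partial^I \deg(h'^2 \cdot x^{J + 2 e_k}) = (\deg(h' \cdot \sqrt{x^I \cdot x^{J + 2 e_k}}))^2 = (\deg(h \cdot \sqrt{x^I \cdot x^J}))^2,$$
which is exactly what we want. (The case where $\sqrt{x^I \cdot x^J} = 0$, i.e., some column sum of $I+J$ is odd, causes no trouble because $J + 2 e_k$ has the same column-sum parities, so the inductive instance is also zero on both sides.)

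This reduces the proposition to the base case $s = 0$, $h = 1$: for any $1 \times n$ vector $J$ of row sum $d$, one must show $\partial^I \deg(x^J) = (\deg(\sqrt{x^I \cdot x^J}))^2$. My strategy here would be to exploit the explicit Brion--Lee--McMullen-style presentation of the Artinian Gorenstein algebra $A_{\hat\theta}(\hat{\Gamma})$ associated to the simplicial $(d-1)$-sphere $\hat\Gamma$: for a facet $F$, $\deg(x^F)$ equals an orientation sign divided by $\det(A_F)$, where $A_F$ is the $d \times d$ submatrix of coefficients of $\hat\theta$ indexed by vertices of $F$. For general top-degree $x^J$, one reduces via the l.s.o.p.\ relations $\hat\theta_i = 0$ to obtain $\deg(x^J)$ as a rational function in the $a_{i,j}$, and applies $\partial^I$ directly. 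In characteristic $2$, signs collapse and derivatives of squared scalars vanish, allowing the cofactor expressions appearing on the left to be matched with the squared degree on the right.

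The main obstacle is precisely this base case. While the inductive step above is formal, carrying out $\partial^I \deg(x^J)$ for non-squarefree $J$ requires meticulous tracking through the l.s.o.p.\ reductions and ultimately rests on a combinatorial identity among determinant minors that holds only in characteristic $2$. This is the technical heart of the Karu--Xiao argument, and a clean proof appears to require either the volume-polynomial framework they develop or a secondary induction organizing the non-squarefree columns of $J$.
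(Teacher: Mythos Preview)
The paper does not supply its own proof of this proposition: it is quoted as \cite[Corollary~4.2]{KaruXiao}, with only a remark that the Karu--Xiao argument, stated for a fully generic l.s.o.p., adapts to the special l.s.o.p.\ used here via \cite[Corollary~2.10]{LNS}. So there is no in-paper argument to compare against beyond that citation.

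Your reduction to the base case $s=0$ is correct and is indeed how these characteristic-$2$ identities are typically organized: additivity in $h$ via Frobenius, then peeling off one factor of $h$ at a time by absorbing $x_k^2$ into $J$. One small wrinkle: $h$ lives in $K[\hat\Gamma]^s$ and may involve the apex variable $x_c$, whereas $J$ is declared to be $1\times n$. To make the induction go through literally when you peel off $x_c$, you need either to allow $J$ an extra column for $c$ (and check that the base case still holds in that generality) or to treat monomials containing $x_c$ separately. This is a bookkeeping issue, not a conceptual one.

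That said, what you have submitted is not a proof but a reduction. You bring the statement down to its $s=0$ instance and then, in your own words, identify that instance as ``the technical heart of the Karu--Xiao argument'' and stop. All of the content lies exactly there: the identity $\partial^I \deg(x^J) = (\deg\sqrt{x^I x^J})^2$ for top-degree monomials is the Papadakis--Petrotou/Karu--Xiao differential identity for the volume polynomial of the sphere $\hat\Gamma$, and establishing it requires either their volume-polynomial framework or an equivalent determinantal computation in characteristic $2$. Since you have not carried this out, your proposal is an outline rather than a proof---which, to be fair, is also the status of the paper's treatment, since it simply cites the result.
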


In \cite{KaruXiao}, the authors work with a completely generic l.s.o.p. See \cite[Corollary 2.10]{LNS} for a discussion of how to check that \cite[Corollary 4.2]{KaruXiao} can be adapted to l.s.o.p.s which are not fully generic, like the l.s.o.p. which is used for $A_{\hat{\theta}}(\hat{\Gamma})$.  
Using the construction of the bilinear form on the local face module, we deduce the following identity for the bilinear form with a generic special l.s.o.p.

\begin{corollary}\label{cor:KXidentity}
Let $u \in L^s(\Gamma)$ for some $s \le d/2$, 
let $I$ be a valid $d \times n$ matrix of nonnegative integers, and   let $J$ be a $1 \times n$  matrix of nonnegative integers  with row sum $d - 2s$.
Then
$$\partial^I B(u, x^J \cdot u) = (B(u, \sqrt{x^I \cdot x^J}))^2.$$
\end{corollary}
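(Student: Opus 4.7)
The plan is to reduce the identity directly to Proposition~\ref{prop:KXidentity} by translating between the bilinear form $B$ on $L(\Gamma)$ and the Poincar\'{e} pairing on the Gorenstein algebra $A_{\hat{\theta}}(\hat{\Gamma})$ developed in Section~\ref{ssec:bilinear}. Concretely, I would first invoke Lemma~\ref{lem:annlocalface} to lift $u \in L^s(\Gamma)$ to a representative $h \in K[\hat{\Gamma}]^s$ whose image in $A_{\hat{\theta}}(\hat{\Gamma})$ lies in $\operatorname{ann}(x_c)$. Since $\operatorname{ann}(x_c)$ is an ideal, $x^J \cdot h$ also lies in it, so it is a lift of $x^J \cdot u \in L^{d-s}(\Gamma)$, and the description of $B$ at the end of Section~\ref{ssec:bilinear} gives
\[ B(u, x^J \cdot u) \;=\; \deg\bigl(h \cdot (x^J \cdot h)\bigr) \;=\; \deg(h^2 \cdot x^J) \;\in\; K. \]

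Next, I would interpret $\sqrt{x^I \cdot x^J}$ as a class in $L^{d-s}(\Gamma)$ so that the right-hand side of Proposition~\ref{prop:KXidentity} can be rewritten in terms of $B$. If $\sqrt{x^I \cdot x^J}$ is nonzero in $K[\Gamma]$, its underlying face equals that of $x^I \cdot x^J$, which contains the underlying face of $x^I$. By Remark~\ref{rem:validunderlyingsupport} the latter is interior, and since any face of $\Gamma$ containing an interior face is itself interior (by monotonicity of $\sigma$), the monomial $\sqrt{x^I \cdot x^J}$ represents a class in $L^{d-s}(\Gamma)$ whose own image in $A_{\hat{\theta}}(\hat{\Gamma})$ already lies in $\operatorname{ann}(x_c)$. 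Hence
\[ B(u, \sqrt{x^I \cdot x^J}) \;=\; \deg\bigl(h \cdot \sqrt{x^I \cdot x^J}\bigr), \]
and when $\sqrt{x^I \cdot x^J} = 0$ the identity holds trivially.

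Finally, applying $\partial^I$ to the first displayed equation and invoking Proposition~\ref{prop:KXidentity} gives
\[ \partial^I B(u, x^J \cdot u) \;=\; \partial^I \deg(h^2 \cdot x^J) \;=\; \bigl(\deg(h \cdot \sqrt{x^I \cdot x^J})\bigr)^2 \;=\; \bigl(B(u, \sqrt{x^I \cdot x^J})\bigr)^2, \]
completing the proof.

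The main obstacle, such as it is, is a conceptual rather than a computational one: one must be comfortable that $\partial^I$ can legitimately be applied to the scalar $B(u, x^J \cdot u)$ even though $u$ and its lift $h$ may have coefficients involving the $a_{i,j}$, and even though the relations defining $A_{\hat{\theta}}(\hat{\Gamma})$ depend on these parameters. This is precisely the content of Proposition~\ref{prop:KXidentity}, which is stated for arbitrary $h \in K[\hat{\Gamma}]^s$ and therefore handles this automatically. Beyond this, the only genuinely new verification is that $\sqrt{x^I \cdot x^J}$ represents a class in the local face module, which follows from Remark~\ref{rem:validunderlyingsupport} and the monotonicity of $\sigma$.
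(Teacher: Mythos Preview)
Your proposal is correct and follows exactly the route the paper intends: the paper does not spell out a proof, but simply says the corollary follows from Proposition~\ref{prop:KXidentity} ``using the construction of the bilinear form on the local face module,'' and separately notes (immediately after the statement) that Remark~\ref{rem:validunderlyingsupport} ensures $\sqrt{x^I \cdot x^J}$ lies in $L(\Gamma)$. Your write-up supplies precisely these missing details, including the observation that Proposition~\ref{prop:KXidentity} is stated for arbitrary $h \in K[\hat{\Gamma}]^s$ and so absorbs any dependence of the lift on the $a_{i,j}$.
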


Note that Remark~\ref{rem:validunderlyingsupport} implies that the image of $\sqrt{x^I \cdot x^J}$ is a well-defined element of $L(\Gamma)$ in the statement of Corollary~\ref{cor:KXidentity}.

\subsection{Lefschetz results}

We recall Hall's marriage theorem. 

\begin{theorem}\label{thm:Hall}\cite{HallRepresentativesSubsets}
	Let $G = (X,Y,E)$ be a finite bipartite graph with bipartite sets $X$ and $Y$, and edge set $E$. For any subset $S$ of $X$, assume that 
	$|S| \le |\{ y \in Y : (x,y) \in E \textrm{ for some } x \in S \}|$.
	Then there exists an $X$-perfect matching, i.e., an injection $\iota\colon X \to Y$ such that $(x,\iota(x)) \in E$ for all $x \in X$. 
\end{theorem}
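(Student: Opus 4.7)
The plan is to prove Hall's marriage theorem by induction on $|X|$, with the heart of the argument being a case split according to whether the Hall condition is \emph{tight} on some proper nonempty subset. The base case $|X|=1$ is immediate: the hypothesis applied to the singleton forces $N(\{x\})$ to be nonempty, so any neighbor of $x$ can serve as $\iota(x)$.

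For the inductive step, I would first handle the easy case, in which every proper nonempty $S \subsetneq X$ satisfies the strict inequality $|N(S)| \ge |S|+1$. I pick any $x \in X$ and any $y$ adjacent to $x$, set $\iota(x) = y$, and pass to the induced bipartite graph $G'$ on $(X \setminus \{x\}, Y \setminus \{y\})$. Any $S' \subseteq X \setminus \{x\}$ loses at most the single neighbor $y$, so
\[ |N_{G'}(S')| \ge |N_G(S')| - 1 \ge (|S'|+1) - 1 = |S'|, \]
and Hall's condition survives. The inductive hypothesis then yields the remaining matching, which concatenates with $(x,y)$ to give the desired $\iota$.

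In the harder case, there is a proper nonempty $S \subsetneq X$ with $|N(S)| = |S|$. I would apply induction twice. First, the restriction of $G$ to $(S, N(S))$ inherits Hall's condition by inclusion, yielding an $S$-perfect matching $\iota_1 \colon S \to N(S)$. Second, on the induced subgraph $G''$ on $(X \setminus S, Y \setminus N(S))$, I verify Hall's condition as follows: for any $T \subseteq X \setminus S$, applying the original hypothesis to $S \cup T$ and using that $N_G(S \cup T)$ decomposes as the disjoint union $N(S) \sqcup N_{G''}(T)$ gives
\[ |N(S)| + |N_{G''}(T)| = |N_G(S \cup T)| \ge |S| + |T|, \]
so $|N_{G''}(T)| \ge |T|$ because $|N(S)| = |S|$. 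Induction on $G''$ supplies an $(X \setminus S)$-perfect matching $\iota_2$ into $Y \setminus N(S)$, and the images of $\iota_1$ and $\iota_2$ are automatically disjoint, so they combine to the desired $X$-perfect matching.

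The main obstacle is recognizing the necessity of the dichotomy: the naive ``delete one matched pair'' strategy can destroy Hall's condition precisely when some proper subset is tight, which is why one must peel off an entire tight block before inducting on the complementary side. Once this case split is in place, both branches reduce to straightforward applications of the inductive hypothesis.
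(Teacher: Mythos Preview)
Your proof is correct and is the standard inductive argument for Hall's marriage theorem. Note, however, that the paper does not actually prove this statement: it is quoted as a classical result with a citation to Hall's original paper \cite{HallRepresentativesSubsets}, and is used as a black box in the proofs of Theorem~\ref{thm:WL} and Lemma~\ref{lem:generationmiddle}. So there is nothing in the paper to compare your argument against; you have simply supplied a complete proof where the authors chose to invoke the literature.
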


\begin{proof}[Proof of Theorem~\ref{thm:WL} when $k$ has characteristic $2$]
Let $u \in L(\Gamma)^1$ be a nonzero element. By Lemma~\ref{lem:squarefree} and the nondegeneracy of $B$, there is an interior face $G$ of $\Gamma$ with $|G| = d-1$ such that $B(u, x^G)$ is nonzero. 
By Remark~\ref{rem:GleqsigmaG},
for every subset $S$ of $G$,  we have $|S| \le |\sigma(S)|$.
Because $\Gamma$ is vertex-induced, $\sigma(S) = \{ i : i \in \sigma(j) \textrm{ for some vertex } j \in S \}$.  
By Hall's marriage theorem (Theorem~\ref{thm:Hall})
with $X = G$, $Y = \{ 1,\ldots,d\}$ and $E = \{ (j,i) : i \in \sigma(j)\}$,
there is an injection $\iota \colon G \to \{1, \dotsc, d\}$ with $\iota(j) \in \sigma(j)$ for all $j \in G$.

There is a unique $\bar{i} \in \{1, \dotsc, d\}$ which is not in the image of $\iota$. Because $G$ is interior and $\Gamma$ is vertex-induced, there is some vertex $\bar{j} \in G$ with $\bar{i} \in \sigma(\bar{j})$. Let $I$ be the $d \times n$ matrix which is $1$ at $(\iota(j), j)$ for $j \in G$ and at $(\bar{i}, \bar{j})$, and is $0$ elsewhere. By construction, $I$ is valid, and $x^I = x^G \cdot x_{\bar{j}}$. 
Let $j'$ be a vertex of $G$ distinct from $\bar{j}$, which exists because $d \ge 3$, and let $x^J$ be the unique monomial so that $x^J \cdot x_{\bar{j}} \cdot x_{j'} = x^G$. By Corollary~\ref{cor:KXidentity}, we have
$$\partial^I B(\ell \cdot u, x^J \cdot u) = \partial^I \sum_{j=1}^{n} B(u , x^J \cdot x_j \cdot u  ) = \sum_{j=1}^{n} B(u, \sqrt{x^I \cdot x^J \cdot x_j})^2.$$
Since $x^I \cdot x^J = x_{j'} (x_{\bar{j}} \cdot x^J)^2$, we have $\sqrt{x^I \cdot x^J \cdot x_j}  = 0$ for $j \neq j'$,  
and $\sqrt{x^I \cdot x^J \cdot x_{j'}} = x^G$. We deduce that
$$\partial^I B(\ell \cdot u,  x^J \cdot u) = B(u, x^G)^2.$$
In particular, $\partial^I B( \ell \cdot u, x^J \cdot u)$ is nonzero, so $\ell \cdot u$ is nonzero. 
\end{proof}

\begin{corollary}\label{cor:dminus1gen}
If $\Gamma$ is vertex-induced, then $L^{d-1}(\Gamma)$ is generated by classes of the form $x_j x^G$, where $G$ is an interior face of size $d-2$. 
\end{corollary}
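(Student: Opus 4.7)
The plan is to deduce the corollary directly from Theorem~\ref{thm:WL} via the Gorenstein duality furnished by the bilinear form $B$ from Section~\ref{ssec:bilinear}. Recall that $B$ is nondegenerate and $k[\Gamma]$-invariant in the sense that $B(\ell \cdot u, v) = B(u, \ell \cdot v)$, and, since $A_{\hat{\theta}}(\hat{\Gamma})$ is Gorenstein with socle in degree $d$, it induces a perfect pairing $L^s(\Gamma) \times L^{d-s}(\Gamma) \to k$ for each $s$. Under the resulting identification $L^{d-s}(\Gamma) \cong L^s(\Gamma)^{\ast}$, multiplication by $\ell$ from $L^{d-2}(\Gamma)$ to $L^{d-1}(\Gamma)$ is the transpose of multiplication by $\ell$ from $L^{1}(\Gamma)$ to $L^{2}(\Gamma)$, so injectivity of the latter will force surjectivity of the former.

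First I will invoke Theorem~\ref{thm:WL} to conclude that $\ell \cdot \colon L^1(\Gamma) \to L^2(\Gamma)$ is injective. The duality discussed above then yields that $\ell \cdot \colon L^{d-2}(\Gamma) \to L^{d-1}(\Gamma)$ is surjective. By Lemma~\ref{lem:squarefree}, $L^{d-2}(\Gamma)$ is spanned by the squarefree classes $x^G$ with $G$ an interior face of size $d-2$. It follows that $L^{d-1}(\Gamma)$ is spanned by the elements
\[
\ell \cdot x^G \;=\; \sum_{j=1}^{n} x_j \, x^G,
\]
as $G$ ranges over interior faces of size $d-2$. Each such sum is by construction a $k$-linear combination of monomials of the form $x_j x^G$ with $G$ interior of size $d-2$, giving the claim.

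There is no substantive obstacle: the corollary is a formal consequence of the weak Lefschetz statement combined with the Gorenstein duality already developed in Section~\ref{ssec:bilinear}. The only step requiring care is verifying the transpose identification, which is immediate from the $k[\Gamma]$-invariance of $B$ together with the fact that $B$ pairs complementary degrees $s$ and $d-s$.
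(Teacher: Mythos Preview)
Your proof is correct and follows essentially the same route as the paper's own argument: invoke Theorem~\ref{thm:WL} for injectivity of $\ell\colon L^1(\Gamma)\to L^2(\Gamma)$, use the $k[\Gamma]$-invariant nondegenerate pairing $B$ to dualize this to surjectivity of $\ell\colon L^{d-2}(\Gamma)\to L^{d-1}(\Gamma)$, apply Lemma~\ref{lem:squarefree} to span $L^{d-2}(\Gamma)$ by squarefree interior monomials, and expand $\ell=\sum_j x_j$. No substantive difference from the paper.
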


\begin{proof}
Using the bilinear form $B$, the map $L^{d-2}(\Gamma) \to L^{d-1}(\Gamma)$ given by multiplication by $\ell$ is dual to the map $L^1(\Gamma) \to L^2(\Gamma)$ given by multiplication by $\ell$, so it follows from Theorem~\ref{thm:WL} that multiplication by $\ell$ induces a surjection from $L^{d-2}(\Gamma)$ to $L^{d-1}(\Gamma)$. By Lemma~\ref{lem:squarefree}, $L^{d-1}(\Gamma)$ is spanned by classes of the form $\ell \cdot x^G$, where $G$ an interior face of size $d-2$. As $\ell = x_1 + \dotsb + x_n$, this implies the result. 
\end{proof}

When $k$ has characteristic $2$, we will deduce Theorem~\ref{thm:SL} from Corollary~\ref{cor:KXidentity}. To apply Corollary~\ref{cor:KXidentity}, we will need to know that $L^s(\Gamma)$ is generated by monomials of a particular form.

\begin{lemma}\label{lem:generationmiddle}
Suppose  $\Gamma$ is vertex-induced, $d$ is even, and $L^{d/2}(\Gamma)$ is generated by monomials $x^J = x_{j_1}^{a_1} \dotsb x_{j_s}^{a_s}$ such that, for every subset $S$ of $\{1, \dotsc, s\}$, we have $|\sigma(\{j_t : t \in S\})| \ge 2\sum_{t \in S} a_t$. Then $B \colon L^{d/2}(\Gamma)\times L^{d/2}(\Gamma) \to K$ is anisotropic. 
\end{lemma}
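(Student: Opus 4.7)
The plan is to apply the Papadakis--Petrotou identity of Corollary~\ref{cor:KXidentity} in the extremal case $s = d/2$. When $s = d/2$, the matrix $J$ in that Corollary has row sum $0$, so $x^J = 1$, and the identity reduces to
$$\partial^I B(u,u) = \bigl(B(u, \sqrt{x^I})\bigr)^2$$
for every $u \in L^{d/2}(\Gamma)$ and every valid $d \times n$ matrix $I$. Hence, to show $B(u,u) \ne 0$ for a given nonzero $u$, it suffices to produce a valid $I$ for which $B(u, \sqrt{x^I}) \ne 0$, since then $\partial^I B(u,u)$ is a nonzero element of $K$ and in particular $B(u,u) \ne 0$.

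Since $B$ is nondegenerate on $L^{d/2}(\Gamma)$ and this module is spanned by monomials $x^J = x_{j_1}^{a_1}\dotsb x_{j_s}^{a_s}$ satisfying the Hall-type inequality of the hypothesis, I can choose such a generator $x^J$ with $B(u, x^J) \ne 0$. The remaining task is to construct a valid $I$ with $\sqrt{x^I} = x^J$, equivalently $x^I = \prod_t x_{j_t}^{2a_t}$: concretely, $I$ should be a $\{0,1\}$-matrix supported on pairs $(i,j_t)$ with $i \in \sigma(j_t)$, with every row sum equal to $1$, and with column sum $2a_t$ at $j_t$ and zero on every other column.

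To build this $I$, I form the bipartite graph with $V = \{1,\dotsc,d\}$ on the left and, for each $t$, exactly $2a_t$ copies of $j_t$ on the right, joining $i$ to a copy of $j_t$ whenever $i \in \sigma(j_t)$. Both sides have $d$ vertices since $\sum_t 2a_t = 2(d/2) = d$, and an injection from the right side into the left encodes the desired $I$ (set $I_{i,j_t} = 1$ whenever some copy of $j_t$ is mapped to $i$). By Theorem~\ref{thm:Hall}, such an injection exists provided every subset $T$ of the right side has at least $|T|$ neighbors in $V$; since the neighborhood depends only on the underlying index set $S \subseteq \{1, \dotsc, s\}$, the tightest constraint on each support is obtained by taking all $2a_t$ copies for $t \in S$, which produces exactly the inequality $2\sum_{t \in S} a_t \le |\sigma(\{j_t : t \in S\})|$ from the hypothesis. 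Thus $I$ exists, and Corollary~\ref{cor:KXidentity} yields $\partial^I B(u,u) = B(u, x^J)^2 \ne 0$ in $K$, forcing $B(u,u) \ne 0$.

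The main substantive point is the identification of the generalized Hall saturation condition with the combinatorial hypothesis; I do not anticipate a serious obstacle beyond that, since once the matching is produced, validity of $I$ (row sums one, support respecting $\sigma$) and nonvanishing of $x^I$ in $K[\Gamma]$ are immediate from the construction, and the rest is a direct application of the differential-operator identity together with the nondegeneracy of $B$.
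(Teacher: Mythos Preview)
Your proof is correct and follows essentially the same approach as the paper: use nondegeneracy of $B$ together with the spanning hypothesis to find a generator $x^J$ with $B(u,x^J)\ne 0$, apply Hall's theorem to the multiset with $2a_t$ copies of $j_t$ to produce a valid $I$ with $x^I=(x^J)^2$, and conclude via Corollary~\ref{cor:KXidentity} that $\partial^I B(u,u)=B(u,x^J)^2\ne 0$. The only cosmetic difference is which side of the bipartite graph you put the multiset on.
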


\begin{proof}
Let $u$ be a nonzero element of $L^{d/2}(\Gamma)$. Then the nondegeneracy of $B$ implies that there is some monomial $x^J = x_{j_1}^{a_1} \dotsb x_{j_s}^{a_s}$ satisfying the hypothesis of the lemma with $B(u, x^J) \not=0$. Let $T$ be the multiset consisting of $2a_1$ copies of $j_1$, $2a_2$ copies of $j_2$, and so on. Note that $|T| = d$. For any multiset $S' \subset T$, if $S$ is the underlying set associated to $S'$, then $|S'| \le 2 \sum_{t \in S} a_t \le \sigma(\{j_t : t \in S\})$. 
By Hall's marriage theorem (Theorem~\ref{thm:Hall})
with $X = T$, $Y = \{ 1,\ldots,d\}$ and $E = \{ (j,i) : i \in \sigma(j)\}$,
there is 
a bijection $\iota \colon \{1, \dotsc, d\} \to T$ such that 
$i \in \sigma(\iota(i))$ 
for all $i$.
Let $I$ be the $d \times n$ matrix which is $1$ on the entries $(i, \iota(i))$ and $0$ otherwise. By construction, $I$ is valid, and $x^I = (x^J)^2$. By Corollary~\ref{cor:KXidentity}, we have
$$\partial^I B(u, u) = B(u, x^J)^2.$$
This implies that $\partial^I B(u,u) \not=0$, so $B(u, u) \not= 0$. 
\end{proof}

\begin{proof}[Proof of Theorem~\ref{thm:SL} when $k$ has characteristic $2$]
As $L^0(\Gamma) = 0$, the statement is trivial when $s=0$. 
When $d = 2$ or $d=4$, Lemma~\ref{lem:generation} implies that the hypothesis of Lemma~\ref{lem:generationmiddle} is satisfied, proving the anisotropy of the Hodge--Riemann form in degree $d/2$. 

Consider the case when $d=3$ and $s=1$. Let $u$ be a nonzero class in $L^1(\Gamma)$. By Corollary~\ref{cor:dminus1gen}, there is a vertex $\bar{j}$ and an interior vertex $j'$ such that $B(u,  x_{\bar{j}} \cdot x_{j'} )$ is nonzero. Choose some $\bar{i} \in \sigma(\bar{j})$, and let $I$ be the $3 \times n$ matrix which is $1$ in the entries $(\bar{i}, \bar{j})$ and $(i', j')$ for each $i' \in \{1, 2, 3\} \setminus \{\bar{i}\}$, and is $0$ in all other entries. Then $I$ is valid, $x^I = x_{\bar{j}} \cdot x_{j'}^2$, and by Corollary~\ref{cor:KXidentity},
$$\partial^I B(\ell \cdot u, u) = \partial^I \sum_{j=1}^{n} B(u, x_j \cdot u) = B(u, x_{\bar{j}} \cdot x_{j'})^2.$$
Therefore $B(\ell \cdot u, u)$ is nonzero, proving that anisotropy holds. 

The remaining case is when $d=4$ and $s=1$. Let $u \in L^1(\Gamma)$ be a nonzero class. Then $\ell \cdot u$ is nonzero by Theorem~\ref{thm:WL}, so the anisotropy of $B$ on $L^2(\Gamma)$ implies that $B(\ell \cdot u, \ell \cdot u) = B(\ell^2 \cdot u, u)$ is nonzero. 
\end{proof}

\begin{remark}\label{rem:homologylefschetz}
The results in this section generalize to vertex-induced homology triangulations over a field of characteristic $2$. If stronger results were known about spanning sets for local face modules, then one could prove Lefschetz results for local face modules of characteristic $p$ vertex-induced homology triangulations for any prime $p$ using the results in \cite{KLS}. For example, if $d \ge p+1$ and $L^{d-1}(\Gamma)$ is generated by monomials in the interior vertices over a field of characteristic $p$, then the proof of Theorem~\ref{thm:WL} shows that multiplication by $\ell$ induces an injection from $L^1(\Gamma)$ to $L^2(\Gamma)$. 
\end{remark}

\section{Characteristic 0 results}\label{sec:char0}

In this section, we prove Lefschetz results for local face modules in characteristic $0$. We first complete the proof of Theorems~\ref{thm:WL} and \ref{thm:SL} by deducing the case when $k$ has characteristic $0$ from the case when $k$ has characteristic $2$. This argument is fairly standard, see, e.g., \cite[Section 5]{KaruXiao} or \cite[Proposition 3.5]{KLS}. 
We then relate local face modules to the cohomology of toric varieties and use that to prove Theorem~\ref{thm:regular}.

\begin{proof}[Proof of Theorem~\ref{thm:WL} when $k$ has characteristic $0$]
Let $M$ be the matrix whose rows are labeled by interior vertices $j$ of $\Gamma$ and whose columns are labeled by interior faces $G$ of size $d-2$, where the corresponding entry is $B(\ell \cdot x_j, x^G)$, using the bilinear form on the generic local face module over a field $k$ of characteristic $0$. Define $M_2$ similarly, except using the bilinear form on the generic local face module over a field of characteristic $2$. 
By Remark~\ref{rem:Bindependentfield}, the entries of $M_2$ and $M$ are the images of elements in $\Z(a_{i,j})$. In particular, 
the entries of $M_2$ lie in $\mathbb{F}_2(a_{i,j})$ and are independent of the choice of field of characteristic $2$, and similarly for $M$. By the characteristic $2$ case of Theorem~\ref{thm:WL}, the rank of $M_2$ is equal to $\dim L^1(\Gamma) = \ell_1(\Gamma)$. We obtain $M_2$ by reducing $M$ modulo $2$, and so the rank of $M$ is equal to $\ell_1(\Gamma)$, which implies that multiplication by $\ell$ is injective. 
\end{proof}

\begin{proof}[Proof of Theorem~\ref{thm:SL} when $k$ has characteristic $0$]
Recall that the $2$-adic valuation on $\mathbb{Q}$ is $\nu \colon \mathbb{Q} \to \Z \cup \{ \infty \}$, where $\nu(0) = \infty$, $2^{\nu(m)}$ is the largest power of $2$ dividing $m$ for a nonzero integer $m$, and $\nu(\frac{m}{m'}) = \nu(m) - \nu(m')$ for nonzero integers $m,m'$. By Chevalley's Extension Theorem 	\cite{EPValuedFields}*{Theorem~3.1.2}, there exists an extension of the  $2$-adic valuation to a valuation $\nu \colon k \to  \Gamma \cup \{ \infty \}$, for some ordered abelian group $\Gamma$. Consider the valuation ring $R = \{ x \in k : \nu(x) \ge 0 \}$ with maximal ideal $m = \{ x \in k : \nu(x) > 0 \}$ and  residue field $\kappa = R/m$. 
Let $L(\Gamma)_{\kappa}$ denote the generic local face module over $\kappa(a_{i,j})$, and let $L(\Gamma)_k$ denote the generic local face module over $K$. For each $s \le d/2$, choose a basis for $L^s(\Gamma)_{\kappa}$ consisting of monomials. The same monomials form a basis for $L^s(\Gamma)_k$ as well. Given a nonzero element $u$ of $L^s(\Gamma)_k$, we can find an element $\lambda \in k[a_{i,j}]$ so that, when we express $\lambda u$ in this basis, the coefficients are polynomials in $R[a_{i,j}]$, and at least one of the coefficients is nonzero in $\kappa[a_{i,j}]$.
Let $\overline{\lambda u} \in L^s(\Gamma)_{\kappa}$ be the result of reducing all coefficients to $\kappa$, which is nonzero. Then $B(\overline{\lambda u}, \ell^{d - 2s} \cdot \overline{\lambda u}) \not=0$. But this is obtained by reducing $B(\lambda u, \ell^{d - 2s} \cdot \lambda u) = \lambda^2 B(u, \ell^{d - 2s} \cdot u)$ to $\kappa$, so $B(u, \ell^{d - 2s} \cdot u) \not= 0$. 
\end{proof}

\medskip

In order to prove Theorem~\ref{thm:regular}, we first discuss some aspects of the cohomology of algebraic varieties. See \cite{dCM} for a reference. 
Let $Y$ be a complex projective variety, and let $f \colon X \to Y$ be a projective morphism with $X$ a rationally smooth variety, e.g., a simplicial toric variety. Let $d = \dim X$. There is an increasing filtration on the singular cohomology of $X$, $0 \subseteq P_0 \subseteq P_1 \subseteq \dotsb \subseteq P_{2d} = H^*(X; \mathbb{Q})$, called the \emph{perverse filtration}, which is characterized as follows. The decomposition theorem \cite{BBDG} states that there is an isomorphism in $D^b_c(Y)$:
\begin{equation}\label{eq:decomposition}
Rf_* \underline{\mathbb{Q}}_X[d] \stackrel{\sim}{\to} \bigoplus_{\alpha} IC(Y_{\alpha}, \mathcal{L}_{\alpha})[s_{\alpha}].
\end{equation}
Here $\{Y_{\alpha}\}$ is a set of locally closed smooth subvarieties of $Y$, $\mathcal{L}_{\alpha}$ is a simple local system on $Y_{\alpha}$, $IC(Y_{\alpha}, \mathcal{L}_{\alpha})$ is the intersection cohomology perverse sheaf, and $s_{\alpha}$ is an integer. Taking hypercohomology, this induces a decomposition
$$H^*(X; \mathbb{Q}) = \bigoplus_{\alpha} \mathcal{H}(IC(Y_{\alpha}, \mathcal{L}_{\alpha})[s_{\alpha} - d]),$$
where the grading on $H^*(X; \mathbb{Q})$ goes from $0$ to $2d$. 
We set $P_b$ to be the subspace of $H^*(X; \mathbb{Q})$ spanned by $\mathcal{H}(IC(Y_{\alpha}, \mathcal{L}_{\alpha})[s_{\alpha} - d])$ for those $\alpha$ with $s_{\alpha} \ge d - b$.  While the isomorphism in \eqref{eq:decomposition} is not unique, the induced filtration is independent of this choice. 

A class $x \in H^i(X; \mathbb{Q})$, thought of as an element of $\operatorname{Hom}_{D^b_c(X)}(\underline{\mathbb{Q}}_X, \underline{\mathbb{Q}}_X[i])$, pushes forward to give a map $Rf_* \underline{\mathbb{Q}}_X \to Rf_* \underline{\mathbb{Q}}_X[i]$, and so it induces a map
$$\bigoplus_{\alpha} IC(Y_{\alpha}, \mathcal{L}_{\alpha})[s_{\alpha} - d] \to \bigoplus_{\alpha} IC(Y_{\alpha}, \mathcal{L}_{\alpha})[s_{\alpha} + i - d].$$
The definition of the perverse $t$-structure implies that there are no nonzero maps from $IC(Y_{\alpha}, \mathcal{L}_{\alpha})[s_{\alpha} - d]$ to $IC(Y_{\beta}, \mathcal{L}_{\beta})[s_{\beta} + i - d]$ unless $s_{\beta} - s_{\alpha} + i \ge 0$. Therefore, for each $b$, we have $x \cdot P_b \subseteq P_{b + i}$. In particular, the associated graded $\operatorname{Gr}^{\bullet} P$ is an $H^*(X; \mathbb{Q})$-module, where $H^i(X; \mathbb{Q}) \cdot \operatorname{Gr}^{\bullet} P \subseteq \operatorname{Gr}^{\bullet + i} P$. Note that $\operatorname{Gr}^{\bullet} P$ is bigraded, where the grading arising from the grading on $H^*(X; \mathbb{Q})$ is not indicated in the notation. 

The intersection cohomology perverse sheaves associated to simple local systems are simple objects in the category of perverse sheaves. In particular, there are no nonzero maps $IC(Y_{\alpha}, \mathcal{L}_{\alpha}) \to IC(Y_{\beta}, \mathcal{L}_{\beta})$ unless $IC(Y_{\alpha}, \mathcal{L}_{\alpha})$ is isomorphic to  $IC(Y_{\beta}, \mathcal{L}_{\beta})$. Collecting the summands in \eqref{eq:decomposition} which are isomorphic to a shift of some fixed $IC(Y_{\alpha}, \mathcal{L}_{\alpha})$ and taking hypercohomology, we obtain an $H^*(X; \mathbb{Q})$-module summand of $\operatorname{Gr}^{\bullet} P$.

\medskip

Let $\sigma \colon \Gamma \to 2^V$ be a regular triangulation of a simplex. Identify $2^V$ with the standard simplex in $\mathbb{R}^d$.
Because $\Gamma$ is regular, it is possible to identify the vertices of $\Gamma$ with points of the standard simplex and choose a height $h(j)$ for each vertex $j$ so that the induced regular subdivision of the standard simplex is $\Gamma$. 
Because $\Gamma$ is a triangulation, we can perturb the coordinates of the vertices of $\Gamma$ so that they lie in $\mathbb{Q}^d$ without changing the combinatorial structure. We may also assume that the heights are rational. 

Let $\Sigma_{\Gamma}$ be the fan over $\Gamma$, i.e., $\Sigma_{\Gamma}$ is a fan whose support is $\mathbb{R}^d_{\ge 0}$, and whose cones are the cones generated by the faces of $\Gamma$. Let $X_{\Gamma}$ be the associated toric variety. Because $\Sigma_{\Gamma}$ is a simplicial fan, $X_{\Gamma}$ is rationally smooth. 

Because the support of $\Sigma_{\Gamma}$ is $\mathbb{R}^d_{\ge 0}$, there is a proper 
map $f \colon X_{\Gamma} \to \mathbb{A}^d$. We can then apply the decomposition theorem to $Rf_* \underline{\mathbb{Q}}_{X_{\Gamma}}[d]$. By \cite[Theorem 5.1]{deCataldoMiglioriniMustata18}, the strata appearing are torus-orbits in $\mathbb{A}^d$, and all local systems appearing are trivial. For $S \in 2^V$, let $V(S)$ denote the coordinate subspace of $\mathbb{A}^d$ where the coordinates labeled by $S$ are all $0$. 
The decomposition theorem then takes the form
$$Rf_* \underline{\mathbb{Q}}_{X_{\Gamma}}[d] \stackrel{\sim}{\to} \bigoplus_{S \in 2^V} \bigoplus_{s \in \mathbb{Z}} \underline{\mathbb{Q}}_{V(S)}^{\oplus \ell(S, s)}[d - |S| + s],$$
where $\ell(S, s) = 0$ if $s + |S|$ is odd. 
In \cite[Section 5]{Stanleylocalh}, Stanley shows that $\ell(V, d-2k) = \ell_{k}(\Gamma)$ by showing that it satisfies the same recursion as the local $h$-polynomial, i.e., the local $h$-polynomial enumerates the number of summands which are supported at the origin.

Because $\Gamma$ is a regular subdivision, $f$ is a projective morphism. Explicitly, each vertex $j$ of $\Gamma$ corresponds to a torus-invariant divisor $D_j$. Let $\lambda_j$ be the sum of the coordinates of the primitive lattice point on the ray spanned by $j$. The class $\sum_j h(j) \lambda_j [D_j] \in H^2(X_{\Gamma}; \mathbb{Q})$ is relatively ample. 
The relative Hard Lefschetz theorem implies that the sequence $(\ell(V, d - 2k))_{k=0, 1, \dotsc, d}$ is unimodal, thereby proving the unimodality of the local $h$-polynomial \cite[Theorem~5.2]{Stanleylocalh}. We will prove a stronger statement below.

\medskip

Let $\Sigma_{\hat{\Gamma}}$ be the fan obtained by adding to $\Sigma_{\Gamma}$ the ray spanned by $(-1, \dotsc, -1)$, and for each face $F$ of $\Gamma$ which is not interior, adding the cone generated by $(-1, \dotsc, -1)$ and the vertices of $F$. Note that $\Sigma_{\hat{\Gamma}}$ is a complete fan. 

Let $X_{\hat{\Gamma}}$ be the toric variety corresponding to $\Sigma_{\hat{\Gamma}}$. Let $\hat{f} \colon X_{\hat{\Gamma}} \to \mathbb{P}^d$ be the map induced by the fact that $\Sigma_{\hat{\Gamma}}$ is a subdivision of the fan of $\mathbb{P}^d$. The map $\hat{f}$ is projective: if we add a sufficiently large multiple of the divisor corresponding to the ray spanned by $(-1, \dotsc, -1)$ to $\sum_j h(j) \lambda_j [D_j] \in H^2(X_{\hat{\Gamma}}; \mathbb{Q})$, then the associated piecewise-linear function is convex, and so this class is ample.

\begin{proof}[Proof of Theorem~\ref{thm:regular}]
We will apply the decomposition theorem to $R\hat{f}_* \underline{\mathbb{Q}}_{X_{\hat{\Gamma}}}[d]$. By \cite[Theorem 5.1]{deCataldoMiglioriniMustata18}, the summands which appears are shifts of the constant sheaf on torus-orbit closures in $\mathbb{P}^d$. Because $\hat{f}$ is an isomorphism over the generic point of any torus-orbit closure which is disjoint from $\mathbb{A}^d \subset \mathbb{P}^d$, all summands which appear are shifts of constant sheaves on a torus-orbit closure which intersects $\mathbb{A}^d$. We deduce that
\begin{equation}\label{eq:hatfdecomp}
R\hat{f}_* \underline{\mathbb{Q}}_{X_{\hat{\Gamma}}}[d] \stackrel{\sim}{\to} \bigoplus_{S \in 2^V} \bigoplus_{s \in \mathbb{Z}} \underline{\mathbb{Q}}_{\mathbb{P}^{V \setminus S}}^{\oplus \ell(S, s)}[d - |S| + s],
\end{equation}
where $\mathbb{P}^{V \setminus S}$ is the closure of the coordinate subspace $V(S)$ of $\mathbb{A}^d$ and $\ell(S, s) = 0$ if $s + |S|$ is odd. After halving the degrees, we can identify $H^*(X_{\hat{\Gamma}}; \mathbb{Q})$ with the ring denoted $A_{\hat{\theta}}(\hat{\Gamma})$ in Section~\ref{ssec:bilinear}, where $k=\mathbb{Q}$ and the l.s.o.p. is the one induced by the globally linear functions on $\mathbb{R}^d$ \cite[Section 5.2]{Fulton}. Explicitly, if $\rho_j$ is the primitive lattice point on the ray spanned by $j$ and $e_i$ is the $i$th standard basis vector, then $\theta_i = \sum_{j \in \Gamma} \langle e_i, \rho_j \rangle x_j$. Let $x_c \in A_{\hat{\theta}}^1(\hat{\Gamma}) = H^2(X_{\hat{\Gamma}}; \mathbb{Q})$ be the class of the divisor corresponding to the ray spanned by $(-1, \dotsc, -1)$. Note that $x_c$ is the pullback of the hyperplane class on $\mathbb{P}^d$. For $k=0, \dotsc, d-1$, let $L_k = \mathbb{Q}[x_c]/(x_c^{k+1})$,  thought of as a module over $\mathbb{Q}[x_c]/(x_c^{d+1})$. Here $x_c$ is in degree $1$. 
Taking the hypercohomology of \eqref{eq:hatfdecomp}, we have an isomorphism
$$A_{\hat{\theta}}(\hat{\Gamma}) \stackrel{\sim}{\to} \bigoplus_{S \in 2^V} \bigoplus_{s \in \mathbb{Z}} L_{d - |S|}^{\oplus \ell(S, s)}[(- |S| + s)/2].$$
This decomposition gives the Jordan block decomposition of the action of $x_c$ on $A_{\hat{\theta}}(\hat{\Gamma})$. Let $o$ be the origin, and note that the hypercohomology of the summands isomorphic to a shift of $\underline{\mathbb{Q}}_{o}$ is $\oplus_{s \in \mathbb{Z}} L_0^{\oplus \ell(V, s)}[(- d + s)/2]$. The intersection of $P_b$ with this subspace of $H^*(X_{\hat{\Gamma}}; \mathbb{Q})$ is $\oplus_{s \ge d - b} L_0^{\oplus \ell(V, s)}[(- d + s)/2]$.
By Lemma~\ref{lem:annlocalface}, $L_{\theta}(\Gamma)$ is identified with $\operatorname{ann}(x_c)/((x_c) \cap \operatorname{ann}(x_c))$, and we see that $\operatorname{ann}(x_c)/((x_c) \cap \operatorname{ann}(x_c))$ is identified with $\oplus_{s \in \mathbb{Z}} L_0^{\oplus \ell(V, s)}[(- d + s)/2]$. This identifies $L_{\theta}(\Gamma)$ with a summand of the associated graded of $H^*(X_{\hat{\Gamma}}; \mathbb{Q})$ with respect to the perverse filtration, and so the relative Hard Lefschetz theorem \cite[Theorem 1.6.3]{dCM} implies that the action of any ample class induces an isomorphism from $L_{\theta}^{t}(\Gamma)$ to $L_{\theta}^{d-t}(\Gamma)$ for each $t$. 

It remains to deduce that $L(\Gamma)$ has the strong Lefschetz property from this. Given an element $y \in \mathbb{Q}[\Gamma]^1$, multiplication by $y^{d - 2t}$ induces an isomorphism from $L^t_{\theta}(\Gamma)$ to $L^{d - t}_{\theta}(\Gamma)$ if and only if the bilinear form $L^t_{\theta}(\Gamma) \times L^t_{\theta}(\Gamma) \to \mathbb{Q}$ given by $(u, v) \mapsto B(u, y^{d-2t} \cdot v)$ is nondegenerate. Let $M_{\theta}$ be the matrix whose rows and columns are indexed by interior faces of size $t$ and whose entry labeled by faces $F$ and $G$ is $B(x^F, y^{d - 2t} \cdot x^G)$, using the bilinear form on $L_{\theta}(\Gamma)$. Then multiplication by $y^{d - 2t}$ is an isomorphism if and only if the rank of $M_{\theta}$ is equal to $\ell_t$. Let $M$ be the matrix which is defined analogously except using the bilinear form on $L(\Gamma)$. The entries of $M$ are rational functions, and we obtain $M_{\theta}$ by evaluating these rational functions. Therefore the rank of $M$ is at least the rank of $M_{\theta}$, and so if multiplication by $y^{d - 2t}$ induces an isomorphism from $L^t_{\theta}(\Gamma)$ to $L^{d - t}_{\theta}(\Gamma)$, then multiplication by $y^{d - 2t}$ also induces an isomorphism from $L^t(\Gamma)$ to $L^{d - t}(\Gamma)$. 
\end{proof}

\section{Discussion and examples}

\subsection{Examples}

We first recall some properties of the join of two subdivisions. 
Let $\sigma\colon \Gamma \to 2^V$ and $\sigma'\colon \Gamma' \to 2^{V'}$ be topological subdivisions of simplices with $|V| = d$ and $|V'| = d'$. Let $\Gamma * \Gamma' = \{ (F,F') : F \in \Gamma, F' \in \Gamma' \}$ be the join of $\Gamma$ and $\Gamma'$. Then there is a corresponding topological subdivision 
$\sigma \times \sigma' \colon \Gamma * \Gamma' \to 2^{V \sqcup V'}$ defined by $\sigma(F,F') = \sigma(F) \cup \sigma(F')$. Moreover, if $\sigma$ and $\sigma'$ are regular subdivisions, then $\sigma * \sigma'$ is a regular subdivision. Let $k$ be a field.  Then the corresponding face ring is $k[\Gamma * \Gamma'] \cong k[\Gamma] \otimes_k k[\Gamma']$. If 
$\theta = \{ \theta_1, \dotsc, \theta_d \}$ and $\theta' = \{ \theta_1', \dotsc, \theta_{d'}' \}$ are special l.s.o.p.s for  $k[\Gamma]$ and $k[\Gamma']$ respectively, then $(\theta, \theta') := \{ \theta_1 \otimes 1, \dotsc, \theta_d \otimes 1,  1 \otimes \theta_1', \dotsc, 1 \otimes \theta_{d'}' \}$ is a special l.s.o.p. for $k[\Gamma * \Gamma']$, and,   moreover,  any special l.s.o.p. for $k[\Gamma * \Gamma']$ has this form. Then $A_{(\theta, \theta')}(\Gamma * \Gamma') \cong  A_\theta(\Gamma) \otimes_k A_{\theta'}(\Gamma')$ and  $L_{(\theta, \theta')}(\Gamma * \Gamma') \cong  L_\theta(\Gamma) \otimes_k L_{\theta'}(\Gamma')$.

\begin{example}\label{ex:counterexamples}
Let $V = \{ v_1, v_2 , v_3 \}$ and let $\sigma \colon \Gamma \to 2^V$ be 
the regular subdivision of a standard simplex with a unique interior point $w$, i.e., $\Gamma$ has vertices $v_1,v_2,v_3, w$ with $\sigma(w) = V$, $\sigma(v_s) = v_s$ for $s \in \{ 1,2,3\}$, and minimal nonface $\{ v_1, v_2, v_3 \}$. 
Fix a field $k$ of characteristic $p$ (here we allow $p = 0$), 
and let $\theta$ be a special l.s.o.p. for $k[\Gamma]$. 
Then $A_\theta(\Gamma) \cong k[x]/(x^3)$ and $L_\theta(\Gamma) = k x \oplus k x^2$, where $x = x_w$.

For $t$ a positive integer, consider the regular subdivision obtained by taking the join $\sigma \times \cdots \times \sigma$ of $\sigma$ with itself $t$ times. Let $\Gamma_t = \Gamma * \cdots * \Gamma$ and let $\theta_t$ be a special l.s.o.p. for $k[\Gamma_t]$. 
For $1 \le s \le t$,  let $x_s$ be the variable in $k[\Gamma_t]$ corresponding to the interior vertex in the $s$th copy of $\Gamma$ in $\Gamma_t$.
By the discussion above,
$A_{\theta_t}(\Gamma_t) \cong k[x_1,\ldots,x_t]/(x_1^3,\ldots,x_t^3)$, and 
$L_{\theta_t}(\Gamma_t)$ has a $k$-basis $\{ x_1^{l_1}\cdots x_t^{l_t} : l_s \in \{ 1,2 \} \textrm{ for } 1 \le s \le t \}$. 
The corresponding nondegenerate bilinear form $B \colon L_{\theta_t}(\Gamma_t) \times L_{\theta_t}(\Gamma_t) \to k$ satisfies the property that
$B(x_1^{l_1}\cdots x_t^{l_t}, x_1^{l_1'}\cdots x_t^{l_t'})$ is nonzero if and only if $l_s + l_{s}' = 3$ for $1 \le s \le t$.

Consider a nonzero element $y = \sum_{s = 1}^t a_s x_s \in A_{\theta_t}^1(\Gamma_t)$ for some $a_s \in k$. 
For $t \le m \le 2t$, let $z_m := \sum_{(l_1,\ldots,l_t)} a_1^{l_1 - 1}\cdots a_t^{l_t - 1} x_1^{l_1}\cdots x_t^{l_t} \in L^m_{\theta_t}(\Gamma_t)$, where the sum varies over all $(l_1,\ldots,l_t)$ with $l_s \in \{ 1,2 \}$ for $1 \le s \le t$, and $\sum_s l_s = m$. For example, $z_t = x_1 \cdots x_t$ and $z_{2t} = (\prod_s a_s) x_1^2 \cdots x_t^2$. We claim that $y \cdot z_m = (m + 1 - t) z_{m + 1}$ in $L_{\theta_t}(\Gamma_t)$. Indeed, we compute 
\begin{align*}
	y \cdot z_m &= \left( \sum_{s = 1}^t a_s x_s \right) \sum_{(l_1,\ldots,l_t)} a_1^{l_1 - 1}\cdots a_t^{l_t - 1} x_1^{l_1}\cdots x_t^{l_t} \\
&= \sum_{(l_1',\ldots,l_t')} |\{ s : l_s' = 2 \}|	 a_1^{l_1' - 1}\cdots a_t^{l_t' - 1} x_1^{l_1'}\cdots x_t^{l_t'}, 
\end{align*}
where  the latter sum varies over all $(l_1',\ldots,l_t')$ with $l_s' \in \{ 1,2 \}$ for $1 \le s \le t$ and $\sum_s l_s' = m + 1$. The claim follows since $|\{ s : l_s' = 2 \}| = m + 1 - t$. 

In particular, 
multiplication by $y^t$ induces a map from $L^t_{\theta_t}(\Gamma_t) = k x_1\cdots x_t$ to  $L^{2t}_{\theta_t}(\Gamma_t) = k x_1^2\cdots x_t^2$ given by multiplication by $t! \prod_s a_s$. We conclude that the strong Lefschetz property fails for  $L_{\theta_t}(\Gamma_t)$ if $0 < p \le t$. 

Similarly, multiplication by $y^{\lceil \frac{t}{2} \rceil}$ induces a map from $L^t_{\theta_t}(\Gamma_t) = k x_1\cdots x_t$ to  $L^{\lceil \frac{3t}{2} \rceil}_{\theta_t}(\Gamma_t)$, and 
$y^{\lceil \frac{t}{2} \rceil} \cdot x_1\cdots x_t = y^{\lceil \frac{t}{2} \rceil} \cdot z_t = \lceil \frac{t}{2} \rceil! z_{\lceil \frac{3t}{2} \rceil}$. Therefore if $0 < p \le \lceil \frac{t}{2} \rceil$, or, equivalently, $p > 0$ and $2p - 1 \le t$, then the above map is not injective. Because the local $h$-vector of $\Gamma_{t}$ is unimodal, this implies that the weak Lefschetz property fails for  $L_{\theta_t}(\Gamma_t)$: at least one of the maps $L_{\theta_t}^{s}(\Gamma_t) \to L_{\theta_t}^{s+1}(\Gamma_t)$ with $t \le s < \lceil \frac{3t}{2} \rceil$ is not injective. For example, when $p = 2$ and $t = 3$, the map $L^4_{\theta_t}(\Gamma_t) \to L^5_{\theta_t}(\Gamma_t)$ induced by multiplication by a nonzero element $y \in A_{\theta_3}^1(\Gamma_3)$ is not injective. In this case, $z_4$ is nonzero, but $y \cdot z_4 = 0$. 

Finally, consider the bilinear form  $L_{\theta_t}^m(\Gamma_t) \times L_{\theta_t}^m(\Gamma_t)  \to k$ given by $(u, v) \mapsto B(u, y^{3t-2m} \cdot v)$ for some $t \le m \le \lfloor \frac{3t}{2} \rfloor$. We claim that this form is not anisotropic for any $m \neq t$ and for $k$ a field of any characteristic. Indeed, consider a basis element $u = x_1^{l_1}\cdots x_t^{l_t}$ of $L_{\theta_t}^m(\Gamma_t)$ with  $l_s \in \{ 1,2 \}$ for  $1 \le s \le t$ and $\sum_s l_s = m > t$. Then there exists an index $s'$ with $l_{s'} = 2$, and it follows that $B(u, y^{3t-2m} \cdot u) = 0$. 
\end{example}

Our next goal is to recall analogues of the $h$-vector and local $h$-vector in Ehrhart theory, and present an analogue of Example~\ref{ex:counterexamples}. We refer the reader to  \cite{BarvinokIntegerPolyhedra} and \cite{BeckRobinsComputing}  for surveys on Ehrhart theory. 
Let $N$ be a lattice, and write $N_\R := N \otimes \R$. 
Let $P \subset N_\R$ be a full-dimensional lattice polytope with respect to $N$. The $h^*$-polynomial $h^*(P;t)$ and local $h^*$-polynomial $\ell^*(P;t)$  are polynomials of degree at most $\dim P$ with nonnegative integer coefficients. The $h^*$-polynomial
encodes the number of lattice points in all integer dilates of $P$, while the local $h^*$-polynomial satisfies 
the symmetry 
$\ell^*(P;t) = t^{\dim P + 1}\ell^*(P;t^{-1})$. 
For example, if $h^*(P;t) = \sum_{i = 0}^{\dim P} h^*_i t^i$ and $\ell^*(P;t) = \sum_{i = 0}^{\dim P} \ell^*_i t^i$, then $h_0^* = 1$, $\ell_0^* = 0$,  $h_1^* = |P \cap N| - \dim P - 1$, and $\ell_1^* = \ell_{\dim P}^* =  h_{\dim P}^* =  |\Int(P) \cap N|$, where $\Int(P)$ denotes the relative interior of $P$.
When $P$ is the empty set,  $h^*(P;t) = \ell^*(P;t) = 1$.

Assume further that $P$ is a simplex with vertex set indexed by $V = \{ 1, \ldots, d\}$. In this case, the $h^*$-polynomial and local $h^*$-polynomial can be inductively defined in terms of each other via the equations
\[
h^*(P;t) = \sum_{U \subset V} \ell^*(F_U;t),\: \: \ell^*(P;t) = \sum_{U \subset V}  (-1)^{|V| - |U|} h^*(F_U;t),
\]
where $F_U$ denotes the face of $P$ with vertices $U \subset V$.  
Suppose 
there exists a unimodular lattice triangulation $\Gamma$ of $P$, i.e., $\Gamma$ is a geometric triangulation of $P$ with vertices in $N$ such that for every facet $G$ of $\Gamma$ with vertices $u_1,\ldots,u_{d}$, the vectors $u_1 - u_{d}, \ldots,u_{d - 1} - u_{d}$ form a basis of $N$.  In this case, $h^*(P;t)$ and $\ell^*(P;t)$ agree with the $h$-polynomial $h(\Gamma;t)$ and local $h$-polynomial $\ell(\Gamma;t)$ respectively.

Let $\tilde{N} = N \oplus \Z$, and let 
$C_P \subset \tilde{N}_\R$ denote the cone generated by $P \times \{1\}$. 
Fix a field $k$. The lattice points $C_P \cap \tilde{N}$ naturally form a semigroup. Let $k[P]$ denote the corresponding semigroup ring. That is, $k[P]$ has a $k$-basis $\{ x^u : u \in C_P \cap \tilde{N}\}$, and $x^{u} x^{u'} = x^{u + u'}$ for any $u,u' \in C_P \cap \tilde{N}$. Projection onto the last coordinate $\hht \colon \tilde{N}_\R \times \R \to \R$  induces a natural grading on $k[P] = \oplus_i k[P]^i$ such that $x^u$ has degree  $\hht(u) \in \Z_{\ge 0}$ for all $u \in C_P \cap \tilde{N}$. 
Let $k[P, \partial P]$ be the graded ideal of $k[P]$ with $k$-basis $\{ x_u : u \in \Int(C_P) \cap \tilde{N}\}$. 
We say that an element $f = \sum_{u \in P \cap N} a_u x^{(u,1)} \in k[P]^1$ is \emph{supported} on $\{ u \in P \cap N : a_u \neq 0 \}$. 
The $k$-algebra $k[P]$ is Cohen-Macaulay with Krull dimension $d$. A collection of 
elements $ \theta = \{ \theta_1,\ldots,\theta_d \}$ with $\theta_i \in k[P]^1$ are an l.s.o.p. if $\mathscr{A}_\theta(P) := k[P]/(\theta_1, \dotsc, \theta_d)$ is finite-dimensional as a $k$-vector space. In this case, the Hilbert polynomial of $\mathscr{A}_\theta(P)$ is equal to $h^*(P;t)$. 
Let
$\sigma \colon  P \cap N \to  2^V$
be the map so that $F_{\sigma(u)}$ is the smallest face of $P$ containing $u$. Then an l.s.o.p. $\theta$ is \emph{special} if  $\theta_i$ is supported on $u \in P \cap N$ with $i \in \sigma(u)$. Special l.s.o.p.s always exist; for example, $\{ x^{(u,1)} : u \textrm{ vertex of } P \}$ is a special l.s.o.p. Given a special l.s.o.p., we define  $\mathscr{L}_{\theta}(P)$ to be the image of $k[P, \partial P]$ in $\mathscr{A}_\theta(P)$. In this case, the Hilbert polynomial of $\mathscr{L}_\theta(P)$ is equal to $\ell^*(P;t)$.  

Let $P'$ be a full-dimensional lattice simplex with respect to a lattice $N'$ of rank $d' - 1$. Let $0_N$ and $0_{N'}$ denote the origins in $N$ and $N'$ respectively. 
The \emph{free join} $P*P'$ is the 
convex hull in $N_\R \oplus N_\R' \oplus \R^2$ of $\{ (u,0_{N'},1,0) : u \in P \}$ and $\{ (0_{N},u',0,1) : u' \in P' \}$. Then $P*P'$ is a 
lattice simplex  with respect to $N \oplus N' \oplus \Z^2$ with $d + d'$ vertices. We have an isomorphism of graded $k$-algebras $\psi \colon  k[P] \otimes_k k[P'] \to k[P * P']$, defined by $\psi(x^{(u,m)} \otimes x^{(u',m')}) = x^{(u,u',m,m',m + m')}$. 
	
	If $\theta = \{ \theta_1,\ldots,\theta_d \}$ and $\theta' = \{ \theta_1',\ldots,\theta_{d'}'\}$ are special l.s.o.p.s for $k[P]$ and $k[P']$ respectively, then $(\theta,\theta') = \{ \psi(\theta_1 \otimes 1), \ldots,  \psi(\theta_d \otimes 1), \psi(1 \otimes \theta_1'), \ldots, \psi(1 \otimes \theta_{d'}') \}$ is a special l.s.o.p. for  $k[P * P']$, and,   moreover,  any special l.s.o.p. for $k[P * P']$ has this form. Then $\psi$ induces isomorphisms 
	$\mathscr{A}_\theta(P) \otimes_k \mathscr{A}_{\theta'}(P') \cong 
	\mathscr{A}_{(\theta,\theta')}(P*P')$ and 
	$\mathscr{L}_\theta(P) \otimes_k \mathscr{L}_{\theta'}(P') \cong 
\mathscr{L}_{(\theta,\theta')}(P*P')$. Finally, we have the following analogue of Example~\ref{ex:counterexamples}. 

\begin{example}\label{ex:ehrhart}
Let $P \subset \R^2$ be the convex hull of $(1,0)$, $(0,1)$ and $(-1,-1)$. Then $P$ has $4$ lattice points; its vertices and the interior lattice point $w = (0,0)$. 
Fix a field $k$ of characteristic $p$ and let $\theta$ be a special l.s.o.p. for $k[P]$. 
Then $\mathscr{A}_\theta(P) \cong k[x]/(x^3)$ and $\mathscr{L}_\theta(\Gamma) = k x \oplus k x^2$, where $x = x_w$. 
For $t$ a positive integer, let $P_t = P * \cdots * P$ be the free join of $P$ with itself $t$ times and let $\theta_t$ be a special l.s.o.p. for $P_t$. 
For $1 \le s \le t$,  let $x_s$ be the variable in $k[P_t]$ corresponding to the interior lattice point in the $s$th copy of $P$ in $P_t$. 
By the discussion above,
$\mathscr{A}_{\theta_t}(P_t) \cong k[x_1,\ldots,x_t]/(x_1^3,\ldots,x_t^3)$, and 
$\mathscr{L}_{P_t}(\Gamma_t)$ has a $k$-basis $\{ x_1^{l_1}\cdots x_t^{l_t} : l_s \in \{ 1,2 \} \textrm{ for } 1 \le s \le t \}$.
We have a natural unimodular lattice triangulation $\Gamma_t$ of $P_t$ corresponding to the regular triangulation of Example~\ref{ex:counterexamples}.
The exact same computations as Example~\ref{ex:counterexamples} then show that 
the strong Lefschetz property fails for  $\mathscr{L}_{\theta_t}(P_t)$ if $0 < p \le t$, and the weak Lefschetz property fails for  $\mathscr{L}_{\theta_t}(P_t)$ if $p > 0$ and $2p - 1 \le t$.
\end{example}

\begin{figure}
\begin{center}
\begin{tikzpicture}[scale=4]
\draw (0.5,0.8) node[above] { $1$ } -- 
(1,0) node[right] { $3$ } -- 
(0,0) node[left] { $2$ } -- (0.5,0.8);

\draw (0.5,0) node[below] { $4$ };
\draw (0.5, 0) -- (0.5, 0.15);
\draw (0,0) -- (0.5, 0.15) node[above] {$5$} -- (1,0);
\draw  plot [smooth] coordinates {(0,0) (0.5,0.27) (1,0)};
\fill (0.5, 0.8) circle[radius = 0.5pt];
\fill (0.5, 0.15) circle[radius = 0.5pt];
\fill (1,0) circle[radius = 0.5pt];
\fill (0,0) circle[radius = 0.5pt];
\fill (0.5,0) circle[radius = 0.5pt];
\end{tikzpicture}
\end{center}
\caption{A triangulation that is not vertex-induced}\label{fig:noninduced}
\end{figure}
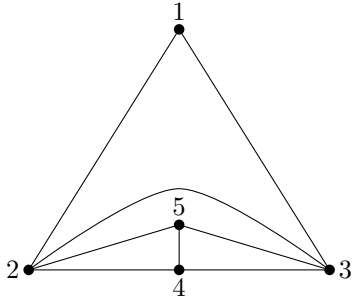

Finally, we give an example which shows that many of the results of this paper fail for quasi-geometric triangulations which are not vertex-induced. The authors of \cite{JKSS} use a cousin of Example~\ref{ex:nonvertexinduced} to construct examples, starting when $d=4$, of quasi-geometric triangulations where the local $h$-polynomial is not unimodal.

\begin{example}\label{ex:nonvertexinduced}
Consider the quasi-geometric triangulation $\Gamma$ of a $2$-dimensional simplex in Figure~\ref{fig:noninduced}, which appeared in \cite[Figure 1(c)]{JKSS}. Note that $\Gamma$ is not vertex-induced, as there are vertices $2$ and $3$ of excess $0$ such that $\{2, 3\}$ is interior. 
Fix a field $k$ 
and let $\theta$ be a special l.s.o.p. for $k[\Gamma]$. 
Then there is a surjection $k[x_4,x_5]/(x_4 \cdot x_5, x_5^2) \to A_\theta(\Gamma)$ which is an isomorphism in degrees at most $2$. 
In particular, $L_\theta(\Gamma) = k x_5 \oplus k x_2 x_3$ and $\ell(\Gamma; t) = t + t^2$. 
We see that $L^1(\Gamma)$ is contained in the socle of $L(\Gamma)$, and that weak Lefschetz fails for $L(\Gamma)$. 
\end{example}

\subsection{Relative local $h$-vectors}\label{ssec:relative}

We now describe the generalization of our results to relative local $h$-vectors. Let $\sigma \colon \Gamma \to 2^V$ be a topological triangulation of a simplex, and let $E$ be a face of $\Gamma$. For a face $F$ of a simplicial complex $\Delta$, let $\operatorname{lk}_F(\Delta)$ be the link of $F$ in $\Delta$, i.e., the collection of faces $G$ of $\Delta$ which are disjoint from $F$, and have the property that $F \sqcup G$ is a face of $\Delta$. The relative local $h$-polynomial is defined as
$$\ell(\Gamma, E; t) = \sum_{U \supset \sigma(E)} (-1)^{|V| - |U|} h(\operatorname{lk}_{\Gamma_U}(E); t).$$
The relative local $h$-vector is the vector of coefficients of the relative local $h$-polynomial. When $E = \emptyset$, the relative local $h$-vector is equal to $\ell(\Gamma)$. 

Like local $h$-vectors, the relative local $h$-vector $\ell(\Gamma, E) = (\ell_0, \dotsc, \ell_{d - |E|})$ of a triangulation is symmetric \cite[Remark 3.7]{AthanasiadisFlag}, with $\ell_s = \ell_{d - |E| - s}$. When $\Gamma$ is quasi-geometric, the relative local $h$-vector is nonnegative \cite{Athanasiadis12b,LPS1}. When $\Gamma$ is regular, the relative local $h$-vector is unimodal \cite{KatzStapledon16}.

\medskip

For a quasi-geometric triangulation $\sigma \colon \Gamma \to 2^V$ and a face $E$ of $\Gamma$, one can construct a module whose Hilbert function is $\ell(\Gamma, E)$. Let $k$ be a field, and let $\{ 1, \ldots, n \}$ be the vertex set of $\Gamma$. In this setting, we say that an l.s.o.p. $\theta = \{\theta_1, \dotsc, \theta_{d - |E|}\}$ for $k[\operatorname{lk}_{\Gamma}(E)]$ is \emph{special} if there is an injective function $f \colon V \setminus \sigma(E) \to \{1, \dotsc, d - |E|\}$ such that, for each vertex $i \in V \setminus \sigma(E)$, $\theta_{f(i)}$ is supported on vertices $j$ of $\operatorname{lk}_{\Gamma}(E)$ 
such that $i \in \sigma(j)$. Set
$$A_{\theta}(\Gamma, E) = k[\operatorname{lk}_{\Gamma}(E)]/(\theta_1, \dotsc, \theta_{d - |E|}),$$
and let $L_{\theta}(\Gamma, E)$  be the image of the ideal $(x^G : \sigma(G \sqcup E) = V)$ of $k[\operatorname{lk}_{\Gamma}(E)]$ in $A_{\theta}(\Gamma, E)$. Then $L_{\theta}(\Gamma, E)$ is a module over $k[\operatorname{lk}_{\Gamma}(E)]$, and for any special l.s.o.p. $\theta$, the Hilbert function of $L_{\theta}(\Gamma, E)$ is $\ell(\Gamma, E)$ \cite{Athanasiadis12b,LPS1}.

After renumbering, we may assume that $\sigma(E) = \{b+1, \dotsc, d\}$, where $b = d - |\sigma(E)|$. Set $K = k(a_{i,j})_{1 \le i \le d - |E|, \, 1 \le j \le n}$. For $1 \le i \le b$, set $\theta_i^{\operatorname{gen}} = \sum_{\sigma(j) \ni i} a_{i,j} x_j \in K[\operatorname{lk}_{\Gamma}(E)]$. For $b+1 \le i \le d - |E|$, set $\theta_i^{\operatorname{gen}} = \sum_{j} a_{i,j} x_j \in K[\operatorname{lk}_{\Gamma}(E)]$. Then $\theta^{\operatorname{gen}} = (\theta_1, \dotsc, \theta_{d - |E|})$ is a special l.s.o.p., and we set $L(\Gamma, E) = L_{\theta^{\operatorname{gen}}}(\Gamma, E)$. 

Using the differential operators technique, the proof of Theorem~\ref{thm:WL} can be easily modified to prove the following result. 

\begin{theorem}
Let $\sigma \colon \Gamma \to 2^V$ be a vertex-induced triangulation of a $(d-1)$-dimensional simplex. Let $E$ be a face of $\Gamma$, and assume that $d - |E| \ge 3$. If $k$ has characteristic $2$ or $0$, then multiplication by $\ell = 
\sum_{j \in \operatorname{lk}_{\Gamma}(E)} x_j$ induces an injection from $L^1(\Gamma, E)$ to $L^2(\Gamma, E)$. 
\end{theorem}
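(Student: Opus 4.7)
The approach is to mirror the proof of Theorem~\ref{thm:WL} step by step, adapting each ingredient to the relative setting. The reduction from characteristic $0$ to characteristic $2$ goes through verbatim as in the opening of Section~\ref{sec:char0}: the rank of the matrix of pairings $B(\ell \cdot x_j, x^G)$, indexed by interior vertices $j$ of $\operatorname{lk}_\Gamma(E)$ and faces $G$ of $\operatorname{lk}_\Gamma(E)$ of size $d - |E| - 2$ with $\sigma(G \sqcup E) = V$, is unchanged upon reduction modulo $2$. So I focus on the characteristic $2$ case.

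The two algebraic inputs needed are a $k[\operatorname{lk}_\Gamma(E)]$-invariant nondegenerate symmetric bilinear form $B$ on $L(\Gamma, E)$ and an analogue of the Karu--Xiao identity (Corollary~\ref{cor:KXidentity}). For the bilinear form, I would adapt Section~\ref{ssec:bilinear}: cone off the boundary of $\operatorname{lk}_\Gamma(E)$ with a new vertex $c$, form the extended l.s.o.p.\ $\hat{\theta}_i = \theta_i - x_c$ for $i \le b = d - |\sigma(E)|$ (leaving the generic elements $\theta_i$, $i > b$, unchanged), verify via Lemma~\ref{lem:Stanleycriterion} that this is an l.s.o.p.\ for the resulting sphere, and identify $L(\Gamma, E)$ with $\operatorname{ann}(x_c)/((x_c) \cap \operatorname{ann}(x_c))$ in the artinian Gorenstein quotient, following Lemma~\ref{lem:annlocalface}. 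The Poincar\'{e} pairing restricts to $B$. For the Karu--Xiao identity, define a valid matrix $I$ to be a $(d - |E|) \times n'$ nonnegative integer matrix with row sums $1$ and $I_{i,j} = 0$ only when $i \le b$ and $i \notin \sigma(j)$; the identity $\partial^I B(u, x^J \cdot u) = B(u, \sqrt{x^I \cdot x^J})^2$ carries over for such $I$ because the proof in \cite{KaruXiao} only uses generic independence of the l.s.o.p.\ coefficients on their prescribed support.

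With these in place, the argument parallels that of Theorem~\ref{thm:WL}. Let $u \in L^1(\Gamma, E)$ be nonzero. A relative analogue of Lemma~\ref{lem:squarefree}, proved by the same substitution argument, yields a face $G$ of $\operatorname{lk}_\Gamma(E)$ with $|G| = d - |E| - 1$, $\sigma(G \sqcup E) = V$, and $B(u, x^G) \ne 0$. For any $S \subseteq G$, the inequality $|S| + |E| \le |\sigma(S) \cup \sigma(E)|$ from Remark~\ref{rem:GleqsigmaG} rearranges to
\[
|S| \;\le\; |\sigma(S) \cap \{1, \dotsc, b\}| + (d - |E| - b),
\]
which is precisely Hall's condition for the bipartite graph with $X = G$, $Y = \{1, \dotsc, d - |E|\}$, and edges $(j, i)$ whenever $i > b$ or ($i \le b$ and $i \in \sigma(j)$). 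Theorem~\ref{thm:Hall} then produces an injection $\iota \colon G \to Y$ respecting these edges. Let $\bar{i}$ be the unique unmatched row; if $\bar{i} \le b$, vertex-inducedness together with $\sigma(G \sqcup E) = V$ gives some $\bar{j} \in G$ with $\bar{i} \in \sigma(\bar{j})$, while if $\bar{i} > b$ any $\bar{j} \in G$ works. Form the valid matrix $I$ with entries $1$ at $(\iota(j), j)$ for $j \in G$ and at $(\bar{i}, \bar{j})$, so $x^I = x^G \cdot x_{\bar{j}}$. Pick $j' \in G \setminus \{\bar{j}\}$ (possible since $|G| \ge 2$ by the hypothesis $d - |E| \ge 3$), let $x^J$ be the monomial with $x^J \cdot x_{\bar{j}} \cdot x_{j'} = x^G$, and the Karu--Xiao identity yields $\partial^I B(\ell \cdot u, x^J \cdot u) = B(u, x^G)^2 \ne 0$, whence $\ell \cdot u \ne 0$.

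The main technical obstacle is verifying the bilinear form construction in the relative setting: because the l.s.o.p.\ mixes $b$ special coordinates with $d - |E| - b$ fully generic ones, one must check carefully that the modified cone $\widehat{\operatorname{lk}_\Gamma(E)}$ together with $\hat{\theta}$ produces an artinian Gorenstein algebra of the correct socle degree, and that the analogue of Lemma~\ref{lem:annlocalface} correctly identifies $L(\Gamma, E)$ with $\operatorname{ann}(x_c)/((x_c) \cap \operatorname{ann}(x_c))$. Once these foundations are established, the combinatorial Hall--marriage argument and the differential operator manipulation go through essentially verbatim.
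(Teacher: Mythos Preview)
Your proposal is correct and follows exactly the approach the paper indicates (the paper only asserts that ``the proof of Theorem~\ref{thm:WL} can be easily modified'' and gives no details). You have carefully worked out the required modifications: the mixed special/generic l.s.o.p., the appropriate notion of valid matrix, the relative bilinear form via coning, and the Hall's marriage argument with the bipartite graph adjusted to allow arbitrary edges into rows $i>b$. Your verification of Hall's condition via $|S\sqcup E|\le|\sigma(S\sqcup E)|$ together with vertex-inducedness is the right adaptation, and your case analysis for locating $\bar{j}$ when $\bar{i}\le b$ versus $\bar{i}>b$ is correct.
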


The proof of Theorem~\ref{thm:SL} does not immediately generalize to the relative setting. The proof of Lemma~\ref{lem:generation} used that, for each $i \in V$, there is a unique vertex $j$ for which $x_j$ only appears in $\theta_i$. This can fail if $E$ is nonempty: there can be multiple $j$ such that $x_j$ only appears in some $\theta_i$. Indeed, we do not know if $L(\Gamma, E)$ has the strong Lefschetz property when $d - |E| = 4$. 

\medskip

As in the case $E = \emptyset$, if $\Gamma$ is a regular triangulation, then the relative local $h$-polynomial has an interpretation in terms of intersection cohomology \cite[Theorem 6.1]{KatzStapledon16}. As in Section~\ref{sec:char0}, let $f \colon X_{\Gamma} \to \mathbb{A}^d$ be the projective toric morphism corresponding to the triangulation. The face $E$ defines a torus-orbit closure $V(E)$ on $X_{\Gamma}$. Then $\ell(\Gamma, E)$ enumerates the copies of shifts of $\underline{\mathbb{Q}}_{o}$ appearing in a decomposition of $Rf_* \underline{\mathbb{Q}}_{V(E)}$. 

There is a natural special l.s.o.p. $\theta$ on $\mathbb{Q}[\operatorname{lk}_{\Gamma}(E)]$, which is defined in terms of the fan of $X_{\Gamma}$, for which $H^*(V(E)) = A_{\theta}(\operatorname{lk}_{\Gamma}(E))$ (after halving the degrees in $H^*(V(E))$). As in the proof of Theorem~\ref{thm:regular}, $L_{\theta}(\Gamma, E)$ can be identified with a summand of the associated graded of $H^*(V(E))$ with respect to the perverse filtration, and the relative Hard Lefschetz theorem then implies the following result.

\begin{theorem}
Let $\sigma \colon \Gamma \to 2^V$ be a regular triangulation of a simplex, and let $E$ be a face of $\Gamma$. If $k$ has characteristic $0$, then $L(\Gamma, E)$ has the strong Lefschetz property. 
\end{theorem}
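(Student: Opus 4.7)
The plan is to mimic the proof of Theorem~\ref{thm:regular}, replacing the ambient projective toric variety $X_{\hat{\Gamma}}$ by the closure $W := \overline{V(E)}$ of the torus-orbit closure $V(E) \subset X_{\Gamma}$ inside $X_{\hat{\Gamma}}$. Since $E$ corresponds to a cone of $\Sigma_{\Gamma}$, the variety $W$ is itself a rationally smooth projective toric variety whose fan is the star of $E$ in $\Sigma_{\hat{\Gamma}}$, with rays indexed by the vertices of $\operatorname{lk}_{\hat{\Gamma}}(E)$. Combinatorially, $\operatorname{lk}_{\hat{\Gamma}}(E)$ triangulates a $(d - |E| - 1)$-sphere: it is obtained from $\operatorname{lk}_{\Gamma}(E)$ by coning off the boundary with the vertex $c$, mirroring how $\hat{\Gamma}$ is built from $\Gamma$. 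The composition $W \hookrightarrow X_{\hat{\Gamma}} \to \mathbb{P}^d$ factors through the projective coordinate subspace $\mathbb{P}^{V \setminus \sigma(E)}$, yielding a projective morphism $g \colon W \to \mathbb{P}^{V \setminus \sigma(E)}$. After halving degrees, one has $H^*(W; \mathbb{Q}) \cong A_{\hat{\theta}}(\operatorname{lk}_{\hat{\Gamma}}(E))$, where $\hat{\theta}$ is the l.s.o.p.\ induced by the global linear coordinates on $\mathbb{R}^{V \setminus \sigma(E)}$, and the pullback of the hyperplane class of $\mathbb{P}^{V \setminus \sigma(E)}$ is the class $x_c$ of the cone vertex.

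Next, I would prove the relative analog of Lemma~\ref{lem:annlocalface}, which says $L_{\theta}(\Gamma, E) \cong \operatorname{ann}(x_c) / ((x_c) \cap \operatorname{ann}(x_c))$ inside $A_{\hat{\theta}}(\operatorname{lk}_{\hat{\Gamma}}(E))$. Because $\operatorname{lk}_{\hat{\Gamma}}(E)$ is a sphere, $A_{\hat{\theta}}(\operatorname{lk}_{\hat{\Gamma}}(E))$ is Artinian Gorenstein, and the same socle-degree comparison as in the absolute case shows that $\operatorname{ann}(x_c)$ is the ideal generated by $\{x^G : \sigma(G \sqcup E) = V\}$. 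I would then apply the decomposition theorem to $Rg_* \underline{\mathbb{Q}}_W[\dim W]$: by the appropriate relative analog of \cite[Theorem 5.1]{deCataldoMiglioriniMustata18}, equivalently by \cite[Theorem 6.1]{KatzStapledon16}, all summands are shifts of constant sheaves on coordinate subspaces of $\mathbb{P}^{V \setminus \sigma(E)}$, and the multiplicities of the shifts of $\underline{\mathbb{Q}}_{o}$ are recorded by $\ell(\Gamma, E; t)$. Taking hypercohomology, the summand of the associated graded of the perverse filtration on $H^*(W; \mathbb{Q})$ which is supported at the origin is identified with $L_{\theta}(\Gamma, E)$, with its grading.

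Finally, I would invoke the relative Hard Lefschetz theorem \cite[Theorem 1.6.3]{dCM} for $g$ with a $g$-ample class of the form $\sum_{j} h(j) \lambda_j [D_j]$ coming from the heights that realize $\Gamma$ as a regular triangulation. This gives the strong Lefschetz property on $L_{\theta}(\Gamma, E)$ for this specific geometric l.s.o.p.\ $\theta$. The transfer to a generic special l.s.o.p.\ is then the verbatim rank argument from the end of the proof of Theorem~\ref{thm:regular}: the matrix of the Hodge--Riemann form in a monomial basis for a generic special l.s.o.p.\ specializes to the corresponding matrix for the geometric l.s.o.p., and specialization cannot increase rank, so the generic form is nondegenerate as well. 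The main obstacle is setting up the relative analog of Lemma~\ref{lem:annlocalface} correctly: one must verify that $\operatorname{lk}_{\hat{\Gamma}}(E)$ is genuinely a sphere, that $\hat{\theta}$ restricts to a special l.s.o.p.\ of the form prescribed in Section~\ref{ssec:relative}, and that the identification $\operatorname{ann}(x_c) = (x^G : \sigma(G \sqcup E) = V)$ matches the definition of $L_\theta(\Gamma, E)$; once these are in place, the argument proceeds in parallel with the absolute case.
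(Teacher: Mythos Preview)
Your proposal is correct and follows essentially the same route as the paper: replace $X_{\hat{\Gamma}}$ by the torus-orbit closure $W$ associated to $E$, identify $H^*(W;\mathbb{Q})$ with an Artinian reduction of $\operatorname{lk}_{\hat{\Gamma}}(E)$, recognize $L_{\theta}(\Gamma,E)$ as the summand of the associated graded of the perverse filtration supported at the origin via \cite[Theorem~6.1]{KatzStapledon16}, and then apply relative Hard Lefschetz and the same specialization argument. The paper's own proof is only a two-sentence sketch (``as in the proof of Theorem~\ref{thm:regular}''), so your write-up is in fact more detailed; the only point to be careful about, which you already flag, is checking that the l.s.o.p.\ induced on $\mathbb{Q}[\operatorname{lk}_{\Gamma}(E)]$ by the global linear functions really is special in the sense of Section~\ref{ssec:relative}, i.e., that after a change of basis the first $d-|\sigma(E)|$ parameters are supported on $\{j:i\in\sigma(j)\}$ while the remaining $|\sigma(E)|-|E|$ have no constraint.
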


\subsection{Generators of local face modules}\label{ssec:generation}

In this section, we make a couple of conjectures related to generating local face modules. 
Let $\sigma \colon \Gamma \to 2^V$ be a quasi-geometric triangulation of a $(d-1)$-dimensional simplex, and let $k$ be a field. 
A consequence of the strong Lefschetz property for $L(\Gamma)$, if it holds, is that $L(\Gamma)$ is generated as a $K[\Gamma]$-module in degree at most $d/2$. While we have seen that the strong Lefschetz property can fail if the characteristic of $k$ is positive, the authors are not aware of any vertex-induced examples where this generation property fails. 

\begin{conjecture}\label{conj:generation}
Let $\sigma \colon \Gamma \to 2^V$ be a vertex-induced triangulation of a $(d-1)$-dimensional simplex, and let $k$ be a field. Then $L(\Gamma)$ is generated as a $K[\Gamma]$-module in degree at most $d/2$. 
\end{conjecture}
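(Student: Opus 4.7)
The plan is to use Stanley's nondegenerate bilinear form $B$ to reformulate the conjecture. Because $B$ is $K[\Gamma]$-invariant and pairs $L^s(\Gamma)$ nondegenerately with $L^{d-s}(\Gamma)$, the module $L(\Gamma)$ is generated in degrees at most $d/2$ if and only if its socle lies in degrees at least $d/2$; equivalently, for every $0 < s < d/2$, multiplication induces a surjection $K[\Gamma]^1 \otimes_K L^{d-s-1}(\Gamma) \to L^{d-s}(\Gamma)$. Corollary~\ref{cor:dminus1gen} is precisely this statement for $s = 1$, and the aim is to generalize its proof: it suffices to write every monomial $v = x^J \in L^{d-s}(\Gamma)$ as $x_j \cdot v'$ for some vertex $j$ and some $v' \in L^{d-s-1}(\Gamma)$.

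By Lemma~\ref{lem:squarefree} and Lemma~\ref{lem:generation}, one may assume the underlying face $G$ of $v$ is interior and contains no vertex of excess $0$. If $J_j \ge 2$ for some $j \in G$, then $v = x_j \cdot (v/x_j)$ and $v/x_j$ is still supported on $G$, hence represents a class in $L^{d-s-1}(\Gamma)$. The hard case is when $v$ is squarefree with $|G| = d - s > d/2$: the factorization then requires a \emph{redundant} vertex $j \in G$ with $\sigma(G \setminus \{j\}) = V$. Although the excess hypothesis forces $\sum_{j \in G} |\sigma(j)| \ge 2|G| > d$, the existence of such a $j$ is not automatic, because one can arrange the carriers so that every vertex $j \in G$ covers some element of $V$ that no other vertex of $G$ covers, blocking the naive factorization.

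When the direct factorization fails, the monomial $v$ must instead be rewritten modulo the relations of the special l.s.o.p., in the spirit of the substitution argument from the proof of Lemma~\ref{lem:generation}; the main combinatorial obstacle is to make such a procedure terminate while staying inside the interior ideal. As a backup, one can attempt the differential operators technique from the proof of Theorem~\ref{thm:WL}: for a hypothetical socle element $u \in L^s(\Gamma)$ with $B(u, v) \ne 0$, use Hall's marriage theorem to construct a valid $d \times n$ matrix $I$ adapted to $v$ together with a monomial $w$ of degree $d - s - 1$, and then apply Corollary~\ref{cor:KXidentity} to produce $\partial^I B(\ell \cdot u, w) = B(u, v)^2 \ne 0$, contradicting $\ell \cdot u = 0$. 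The hard part is that $I$ must now absorb $d - 2s$ extra variables instead of one, so the Hall-type matching becomes far more delicate, and the non-genericity forced by the \emph{special} condition is a substantial barrier as $s$ grows. Granting any such strategy in characteristic $2$, the characteristic $0$ case follows by the reduction modulo $2$ used in Section~\ref{sec:char0}.
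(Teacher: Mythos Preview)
The statement you are attempting to prove is \emph{Conjecture~\ref{conj:generation}}: the paper presents it as an open problem and gives no proof. What the paper does provide is exactly your duality reformulation: the Proposition immediately following the conjecture shows that generation in degree at most $d/2$ is equivalent to $\operatorname{Soc} L^s(\Gamma) = 0$ for $s < d/2$, via the $K[\Gamma]$-invariance and nondegeneracy of $B$. So your first paragraph is correct and matches the paper, but it is a restatement, not progress.

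The remainder of your proposal is a plan, not a proof, and you yourself flag the gaps. Two comments on those gaps. First, in your ``backup'' differential-operators strategy you aim to derive $\partial^I B(\ell\cdot u, w)\neq 0$ for a socle element $u$; but $u\in\operatorname{Soc}L^s(\Gamma)$ forces $\ell\cdot u=\sum_j x_j u=0$ identically in $K$, so $B(\ell\cdot u,w)\equiv 0$ and every $\partial^I$ of it vanishes. The identity from Corollary~\ref{cor:KXidentity} then reads $0=\sum_j B(u,\sqrt{x^I x^{J'} x_j})^2$, and in characteristic $2$ this only yields $\sum_j B(u,\sqrt{\cdots})=0$; to get a contradiction you must arrange that \emph{exactly one} term survives and equals $B(u,v)$ for your chosen $v$. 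That is precisely the Hall-type construction you allude to, and for $s>1$ it requires producing a valid $I$ and a $J'$ of degree $d-2s-1$ with $x^I x^{J'} x_{j'}=v^2$. This is a genuine combinatorial obstruction, not merely ``more delicate'': the paper's Example~\ref{ex:counterexamples} shows that the differential-operators method cannot establish even weak Lefschetz for local face modules in general, so there is no reason to expect the matching to go through without a new idea.

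Second, your direct-factorization approach correctly isolates the squarefree case and correctly observes that a redundant vertex need not exist; but the proposed fallback of ``rewriting modulo the special l.s.o.p.\ relations'' is exactly the step that has no known termination argument, and the paper offers none. In short: your reformulation is the same as the paper's, and the obstacles you identify are the reasons this remains a conjecture.
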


Conjecture~\ref{conj:generation} is closely related to understanding the socle of local face modules. Let 
$$\operatorname{Soc} L^s(\Gamma) = \{u \in L^s(\Gamma) : x_j \cdot u = 0 \text{ for all vertices }j \in \Gamma\}.$$
\begin{proposition}
For each $s$, the dual of $\operatorname{Soc} L^s(\Gamma)$ is identified with $L^{d - s}(\Gamma)/K[\Gamma]^1 \cdot L^{d - s - 1}(\Gamma)$. 
\end{proposition}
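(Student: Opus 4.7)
The plan is to use the $K[\Gamma]$-invariant nondegenerate symmetric bilinear form $B$ on $L(\Gamma)$ constructed in Section~\ref{ssec:bilinear} to set up a duality. Recall that $B$ restricts to a perfect pairing $L^s(\Gamma) \times L^{d-s}(\Gamma) \to K$, and that $B(x \cdot u, v) = B(u, x \cdot v)$ for all $x \in K[\Gamma]$ and all $u, v \in L(\Gamma)$.

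First I would show that $u \in L^s(\Gamma)$ lies in $\operatorname{Soc} L^s(\Gamma)$ if and only if $u$ annihilates $K[\Gamma]^1 \cdot L^{d-s-1}(\Gamma)$ under $B$. One direction is immediate: if $x_j \cdot u = 0$ for every vertex $j$, then for any $v \in L^{d-s-1}(\Gamma)$ we have $B(u, x_j \cdot v) = B(x_j \cdot u, v) = 0$. Conversely, if $B(u, x_j \cdot v) = 0$ for every vertex $j$ and every $v \in L^{d-s-1}(\Gamma)$, then $B(x_j \cdot u, v) = 0$ for all such $v$, and since $x_j \cdot u \in L^{s+1}(\Gamma)$ and the pairing $L^{s+1}(\Gamma) \times L^{d-s-1}(\Gamma) \to K$ is nondegenerate, this forces $x_j \cdot u = 0$.

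Next I would package this as a duality. The nondegenerate pairing $B \colon L^s(\Gamma) \times L^{d-s}(\Gamma) \to K$ identifies $L^s(\Gamma)$ with the dual of $L^{d-s}(\Gamma)$. Under this identification, the subspace $\operatorname{Soc} L^s(\Gamma) \subset L^s(\Gamma)$ corresponds, by the previous step, to the annihilator of $K[\Gamma]^1 \cdot L^{d-s-1}(\Gamma) \subset L^{d-s}(\Gamma)$. The annihilator of a subspace under a perfect pairing is canonically the dual of the quotient, so $\operatorname{Soc} L^s(\Gamma)$ is naturally identified with the dual of $L^{d-s}(\Gamma)/K[\Gamma]^1 \cdot L^{d-s-1}(\Gamma)$, which is the desired statement.

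There is essentially no obstacle here beyond carefully unwinding the definitions; all the substantive content is in the existence and nondegeneracy of the $K[\Gamma]$-invariant pairing $B$, which has already been established in Section~\ref{ssec:bilinear}. The argument is the standard Matlis-type duality for graded modules, specialized to the module $L(\Gamma)$ equipped with its Poincar\'e-type pairing.
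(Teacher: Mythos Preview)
Your proposal is correct and follows essentially the same argument as the paper: both use the $K[\Gamma]$-invariance of $B$ to show that $\operatorname{Soc} L^s(\Gamma)$ is exactly the $B$-annihilator of $K[\Gamma]^1 \cdot L^{d-s-1}(\Gamma)$, and then invoke nondegeneracy to identify this with the dual of the quotient. Your write-up is in fact slightly more explicit than the paper's about the final linear-algebra step (annihilator of a subspace under a perfect pairing is the dual of the quotient), but the substance is identical.
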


\begin{proof}
Given $u \in \operatorname{Soc} L^s(\Gamma)$ and $v \in L^{d-s-1}(\Gamma)$, for any vertex $j$ of $\Gamma$ we have
$$B(u, x_j \cdot v) = B(x_j \cdot u, v) = B(0, v) = 0.$$
Conversely, if $u \in L^s(\Gamma)$ has $B(u, x_j \cdot v) = B(x_j \cdot u, v) = 0$ for all $v \in L^{d-s-1}(\Gamma)$,  then $x_j \cdot u = 0$ by the nondegeneracy of $B$. In particular, if  $B(u, x_j \cdot v)  = 0$ for all $v \in L^{d-s-1}(\Gamma)$ and vertices $j$ of $\Gamma$, then $u \in \operatorname{Soc} L^{s}(\Gamma)$. 
\end{proof}

In particular, Conjecture~\ref{conj:generation} is equivalent to showing that $\operatorname{Soc}L^s(\Gamma) = 0$ for $s < d/2$. 

Suppose that $d$ is even. As noted in Example~\ref{ex:counterexamples}, the symmetric bilinear form $B$ on $L^{d/2}(\Gamma)$ need not be anisotropic, even if $\Gamma$ is regular and $k = \mathbb{Q}$.

\begin{conjecture}\label{conj:socleanisotropy}
Let $\sigma \colon \Gamma \to 2^V$ be a vertex-induced triangulation of a $(d-1)$-dimensional simplex, and let $k$ be a field. If $d$ is even, then the restriction of $B$ to $\operatorname{Soc} L^{d/2}(\Gamma)$ is anisotropic. 
\end{conjecture}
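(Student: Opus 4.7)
The approach is to follow the template of the proof of Theorem~\ref{thm:SL}: first reduce to characteristic $2$, then apply the Papadakis--Petrotou differential operator identity of Corollary~\ref{cor:KXidentity}. The reduction from characteristic $0$ to characteristic $2$ should be the standard one used in the proof of Theorem~\ref{thm:SL}: extend a $2$-adic valuation, clear denominators in a putative socle counterexample $u$, and reduce to the residue field. The socle condition $x_j \cdot u = 0$ is preserved under this reduction because it is cut out by linear equations with coefficients in $k[a_{i,j}]$. For primes $p \neq 0, 2$, the current techniques do not immediately apply, and a positive resolution there would likely require the characteristic-$p$ version of the identity from \cite{KLS} together with stronger generation statements in the spirit of Remark~\ref{rem:homologylefschetz}, or an entirely new idea.

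Assume now that $k$ has characteristic $2$ and let $u \in \operatorname{Soc} L^{d/2}(\Gamma)$ be nonzero. Since $x_j \cdot u = 0$ for every vertex $j$, the $K[\Gamma]$-invariance of $B$ shows that $B(u, \cdot)$ annihilates $K[\Gamma]^1 \cdot L^{d/2-1}(\Gamma)$ and descends to a nonzero functional on the quotient $Q := L^{d/2}(\Gamma)/K[\Gamma]^1 \cdot L^{d/2-1}(\Gamma)$. By nondegeneracy of $B$, there exists a monomial $x^J = x_{j_1}^{a_1} \cdots x_{j_s}^{a_s}$ of degree $d/2$ whose class in $Q$ is nonzero and which satisfies $B(u, x^J) \neq 0$. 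Applying Corollary~\ref{cor:KXidentity} with $s = d/2$ and the trivial matrix $J = 0$, we would get
\[
\partial^I B(u, u) \;=\; B(u, x^J)^2 \;\neq\; 0,
\]
hence $B(u, u) \neq 0$, provided one can produce a valid $d \times n$ matrix $I$ with $x^I = (x^J)^2$. The existence of such $I$ is a bipartite $b$-matching problem: assign each row $i \in \{1, \dots, d\}$ to some $j_t$ with $i \in \sigma(j_t)$, so that $j_t$ receives exactly $2a_t$ assignments. Hall's marriage theorem (Theorem~\ref{thm:Hall}) provides such an $I$ if and only if $|\sigma(\{j_t : t \in S\})| \ge 2 \sum_{t \in S} a_t$ for every $S \subseteq \{1, \ldots, s\}$, a condition we call the \emph{Hall condition} on $x^J$.

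The main obstacle is therefore the following structural refinement of Lemma~\ref{lem:generation}: for vertex-induced $\Gamma$, the quotient $Q$ is spanned by classes of monomials satisfying the Hall condition. Equivalently, any Hall-violating monomial in $L^{d/2}(\Gamma)$ should be expressible, modulo $K[\Gamma]^1 \cdot L^{d/2-1}(\Gamma)$, as a linear combination of monomials of strictly smaller Hall-defect. The natural attempt is to iterate the mechanism of Lemma~\ref{lem:generation}: given a minimal $S$ with $|\sigma(F_S)| < 2\sum_{t \in S} a_t$ where $F_S = \{j_t : t \in S\}$, use Lemma~\ref{lem:Stanleycriterion} to find a linear combination of the $\theta_i$ (with $i \in \sigma(F_S)$) that eliminates some $x_{j_t}$ in favor of variables outside $F_S$. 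The delicate point is to argue that the resulting monomials still lie in $(\operatorname{int}\Gamma)$ and have strictly smaller Hall-defect; this would presumably rely on the vertex-induced hypothesis in the same way that Lemma~\ref{lem:generation} uses it to keep the underlying face interior. Without such a refinement, the differential-operator strategy alone is insufficient, so establishing this combinatorial spanning statement would be the core technical difficulty of the proof.
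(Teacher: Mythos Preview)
This statement is a \emph{conjecture} in the paper, not a theorem; the paper does not prove it in general. Instead, the paper records two special cases: regular triangulations over $\mathbb{Q}$ (via the relative Hodge--Riemann relations, which give definiteness on a primitive subspace containing the socle), and vertex-induced semi-small triangulations (where $L^{d/2}(\Gamma)$ has a basis of squarefree interior monomials $x^F$ with $|F|=d/2$, the semi-small hypothesis forces $|\sigma(G)|=2|G|$ for every $G\subset F$, so the Hall condition holds and Lemma~\ref{lem:generationmiddle} applies).

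Your outline is the natural strategy and coincides with what the paper does in the semi-small case; your reduction from characteristic $0$ to characteristic $2$ is fine (multiplication by each $x_j$ specializes, so socle elements reduce to socle elements). You have also correctly isolated the genuine obstacles. First, the spanning statement---that $Q = L^{d/2}(\Gamma)/K[\Gamma]^1 \cdot L^{d/2-1}(\Gamma)$ is spanned by monomials satisfying the Hall condition---is precisely the missing ingredient; Lemma~\ref{lem:generation} only eliminates excess-$0$ vertices and is too weak in general, and the iterated-substitution idea you sketch runs into the difficulty that replacing $x_{j_t}$ via a combination of the $\theta_i$ with $i\in\sigma(F_S)$ need not reduce the Hall defect, since the newly introduced vertices may have carrier contained in $\sigma(F_S)$ and reconstitute the same obstruction. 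Second, for fields of characteristic $p\notin\{0,2\}$ the Papadakis--Petrotou identity does not apply, and the characteristic-$p$ identity of \cite{KLS} would require an even more restrictive matching condition. So your proposal is not a proof but an accurate diagnosis of why the conjecture remains open.
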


We conclude with two classes of examples where Conjecture~\ref{conj:socleanisotropy} holds. 

\begin{example}
Suppose that $\Gamma$ is regular and $k = \mathbb{Q}$. As in the proof of Theorem~\ref{thm:regular}, let $\theta$ be the special l.s.o.p. for which $L_{\theta}(\Gamma)$ is identified with a summand of the perverse filtration on $H^*(X_{\Gamma})$, and let $\eta \in \mathbb{Q}[\Gamma]^1$ be a relatively ample class. The relative Hodge--Riemann relations \cite[Theorem 3.3.1(7)]{dCM} imply that the restriction of $B$ to the kernel of the map $L^{d/2}_{\theta}(\Gamma) \to L^{d/2 +1}_{\theta}(\Gamma)$ given by multiplication by $\eta$ is $(-1)^{d/2}$-definite. In particular, it is anisotropic. 

This implies that the restriction of $B$ to the kernel of the map $L^{d/2}(\Gamma) \to L^{d/2 +1}(\Gamma)$ given by multiplication by $\eta$ is anisotropic. Indeed, by specializing the coefficients of $\theta^{\operatorname{gen}}$ to $\theta$ one at a time, we can realize the restriction of $B$ to the kernel in $L^{d/2}_{\theta}(\Gamma)$ as a specialization of the restriction of $B$ to the kernel in $L^{d/2}(\Gamma)$. The claim follows, because a quadratic form which specializes to an anisotropic quadratic form is anisotropic. As $\operatorname{Soc} L^{d/2}(\Gamma)$ is contained in the kernel of multiplication by $\eta$, this implies Conjecture~\ref{conj:socleanisotropy} in this case. 
\end{example}

\begin{example}
Suppose that $\Gamma$ is vertex-induced and semi-small, as in Example~\ref{ex:semismall}. Then $L(\Gamma)$ is concentrated in degree $d/2$, so $\operatorname{Soc} L^{d/2}(\Gamma) = L^{d/2}(\Gamma)$. Suppose that $k$ has characteristic $2$. 
Recall that  $L^{d/2}(\Gamma)$ has a basis given by $\{x^F : |F| = d/2, \, \sigma(F) = V\}$. Fix an interior face $F$ with  $|F| = d/2$. 
We claim that $|\sigma(G)| = 2 |G|$ for any $G \subset F$. Assuming this claim, the hypothesis of Lemma~\ref{lem:generationmiddle} holds, and so the restriction of $B$ to $\operatorname{Soc} L^{d/2}(\Gamma)$ is anisotropic. The same holds when $k$ has characteristic $0$, by an argument similar to the proof of Theorem~\ref{thm:SL}. 

It remains to verify the claim. 
The semi-small condition implies that $|\sigma(G)| \le 2 |G|$ and $|\sigma(F \smallsetminus G)| \le 2 |F \smallsetminus G| = d - 2|G|$.
Adding these two inequalities gives $|\sigma(G)| + |\sigma(F \smallsetminus G)| \le d$. 
Since $\Gamma$ is vertex-induced and $\sigma(j) \subset \sigma(G) \cup \sigma(F \smallsetminus G)$ for all vertices $j$ of $F$, we deduce that 
$\sigma(G) \cup \sigma(F \smallsetminus G) = \sigma(F) = V$, and hence all the above inequalities are actually equalities. In particular, $|\sigma(G)| = 2 |G|$, as desired.

\end{example}

\bibstyle{amsalpha}
\bibliography{localface.bib}

\end{document}